\newcommand{\bburl}[1]{\textcolor{blue}{\url{#1}}}
\newcommand\be{\begin{equation}}
\newcommand\ee{\end{equation}}
\newcommand\bea{\begin{eqnarray}}
\newcommand\eea{\end{eqnarray}}
\newcommand\bi{\begin{itemize}}
\newcommand\ei{\end{itemize}}
\newcommand\ben{\begin{enumerate}}
\newcommand\een{\end{enumerate}}
\numberwithin{equation}{section}
\newtheorem{thm}{Theorem}[section]
\newtheorem{lem}[thm]{Lemma}
\newtheorem{exa}[thm]{Example}
\newtheorem{rek}[thm]{Remark}
\numberwithin{thm}{section}
\newcommand{\twocase}[5]{#1 \begin{cases} #2 & \text{#3}\\ #4
&\text{#5} \end{cases}   }
\newcommand{\R}{\ensuremath{\mathbb{R}}}
\newcommand{\Z}{\ensuremath{\mathbb{Z}}}
\newcommand{\Q}{\mathbb{Q}}
\newcommand{\N}{\mathbb{N}}
\begin{document}

\title{Recurrence Relations and Benford's Law\thanks{This work was supported in part by NSF Grants DMS1561945 and DMS1659037, the Finnerty Fund, the University of Michigan, and Williams College. }
}


\author{Madeleine Farris, Noah Luntzlara, Steven J. Miller, Lily Shao and Mengxi Wang}

\authorrunning{Farris, Luntzlara, Miller, Shao and Wang} 

\institute{Madeleine Farris, Department of Mathematics, Wellesley College,  \email{mfarris@wellesley.edu}  \\
Noah Luntzlara, Department of Mathematics, University of Michigan, \email{nluntzla@umich.edu} \\
Steven J. Miller, Department of Mathematics and Statistics, Williams College, \email{sjm1@williams.edu} \\
Lily Shao, Department of Mathematics and Statistics, Williams College, \email{ls12@williams.edu} \\
Mengxi Wang, Department of Mathematics, University of Michigan, \email{mengxiw@umich.edu} \\
}

\date{Received: date / Accepted: date}

\maketitle

\begin{abstract} There are now many theoretical explanations for why Benford's law of digit bias surfaces in so many diverse fields and data sets. After briefly reviewing some of these, we discuss in depth recurrence relations. As these are discrete analogues of differential equations and model a variety of real world phenomena, they provide an important source of systems to test for Benfordness. Previous work showed that fixed depth recurrences with constant coefficients are Benford modulo some technical assumptions which are usually met; we briefly review that theory and then prove some new results extending to the case of linear recurrence relations with non-constant coefficients. We prove that, for certain families of functions $f$ and $g$, a sequence generated by a recurrence relation of the form $a_{n+1} = f(n)a_n + g(n)a_{n-1}$ is Benford for all initial values.  The proof proceeds by parameterizing the coefficients to obtain a recurrence relation of lower degree, and then converting to a new parameter space. From there we show that for suitable choices of $f$ and $g$ where $f(n)$ is nondecreasing and $g(n)/f(n)^2 \to 0$ as $n \to \infty$, the main term dominates and the behavior is equivalent to equidistribution problems previously studied. We also describe the results of generalizing further to higher-degree recurrence relations and multiplicative recurrence relations with non-constant coefficients, as well as the important case when $f$ and $g$ are values of random variables.

\keywords{Benford's law, recurrence relations}
\subclass{MSC 11K06, 60F05, 65Q30}
\end{abstract}

\tableofcontents

\section{Introduction}

\subsection{History}

In 1938 Frank Benford \cite{Ben} observed that in many numerical datasets, the leading digit is not equidistributed among $\{1,\dots,9\}$ as one might expect, but instead heavily biased towards low digits, particularly 1. Frequently the probability of a number having first digit $d$ base-$b$ is $\log_{b}\left(1+1/d\right)$ (so base-$10$ it ranges from about 30\% for a first digit of 1, down to about 4.6\% for a leading digit of 9); this phenomenon became known as Benford's Law. See [BerH1, BerH2, Hi, Mi] and the references therein for some of the history, theory and applications. In addition to being of theoretical interest, Benford's Law has found applications in numerous fields from data integrity (used to detect tax, voter and data fraud) to computer science (designing optimal systems to minimize rounding errors); many of these diverse systems are discussed in detail in the edited book \cite{Mi}.   

To give just a few examples, in \cite{BBH,KM} it was proved that many dynamical systems exhibit Benford behavior, including most power, exponential and rational functions, linearly-dominated systems, non-autonomous dynamical systems, the Riemann zeta function, the $3x + 1$ Problem, and more. Depending on the structure of the system, different techniques are better suited for the analysis. Below we assume our numbers are positive and work in base 10; one can easily generalize to other bases, and if we have complex numbers we can look at their absolute value (though we must exclude zeros). Most of these methods start with the following observation; note $y$ modulo $1$ (or $y \bmod 1$) means the fractional part of $y$.

\begin{lem} \label{lem:benfordEquidistr}
A sequence $\{a_n\}$ is Benford if and only if the sequence $\{\log_{10}a_n\}$ is equidistributed modulo $1$.
\end{lem}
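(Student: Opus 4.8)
The plan is to unpack both notions into statements about the distribution of the significand and then connect them by the change of variables $t=\log_{10}s$. Recall that every positive real $x$ factors uniquely as $x=S(x)\cdot 10^{k}$ with significand $S(x)\in[1,10)$ and $k=\lfloor\log_{10}x\rfloor\in\Z$; taking logarithms gives $\log_{10}x=k+\log_{10}S(x)$, and since $\log_{10}S(x)\in[0,1)$ we have $\{\log_{10}x\}=\log_{10}S(x)$. Thus the fractional part of $\log_{10}a_n$ records exactly the significand of $a_n$, so controlling one controls the other, and the entire lemma will come down to transporting a statement about significands through this identity.

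First I would write the (strong) Benford condition in significand form: $\{a_n\}$ is Benford if and only if for every $s\in[1,10)$ the proportion of indices $n\le N$ with $S(a_n)\le s$ tends to $\log_{10}s$ as $N\to\infty$. Substituting $t=\log_{10}s\in[0,1)$ and using $\{\log_{10}a_n\}=\log_{10}S(a_n)$, this becomes precisely the statement that the empirical distribution functions $F_N(t):=\tfrac1N\#\{n\le N:\{\log_{10}a_n\}\le t\}$ converge to $t$ for every $t\in[0,1)$.

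The second step is to invoke the standard characterization of equidistribution: a sequence $\{x_n\}$ is equidistributed modulo $1$ if and only if $\tfrac1N\#\{n\le N:\{x_n\}\in[\alpha,\beta)\}\to\beta-\alpha$ for all $0\le\alpha<\beta\le 1$, and this in turn holds if and only if $F_N(t)\to t$ pointwise on $[0,1]$ (the case $\alpha=0$). The forward implication of the latter equivalence is immediate upon writing $[\alpha,\beta)$ as the set difference $[0,\beta)\setminus[0,\alpha)$, and the reverse is trivial. Chaining these equivalences, equidistribution of $\{\log_{10}a_n\}$ is the same as the significand condition above, which is the Benford property.

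The one point that requires care — and really the only obstacle — is matching the precise meaning of \emph{Benford}. The equivalence as stated needs the \emph{strong} form of the law, in which the whole significand (not merely the leading digit) is distributed logarithmically; the leading-digit law alone pins down $F_N$ only at the nine points $t=\log_{10}d$ and cannot force full equidistribution. I would therefore state explicitly that throughout we mean strong Benford behavior, and dispatch the boundary points $t\in\{0,1\}$ together with the half-open-versus-closed interval bookkeeping, noting that the measure-zero sets of endpoints do not affect any of the limits.
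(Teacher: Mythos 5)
Your proposal is correct and follows essentially the same route as the paper: write $a_n = S(a_n)\,10^{k}$ in scientific notation, identify the fractional part $\{\log_{10}a_n\}$ with $\log_{10}S(a_n)$, and translate the logarithmic significand law into the interval condition defining equidistribution modulo $1$. Your explicit caveat that the equivalence requires the \emph{strong} (significand) form of Benford's law is well taken; the paper handles the same point by convention, declaring in that subsection that ``Benford'' means strongly Benford, and otherwise treats the two-sided equivalence as immediate.
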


To see this, given any $x$ we can write it in scientific notation as $x = S_{10}(x) 10^{k(x)}$, where $S_{10}(x) \in [1,10)$ is the significand and $k(x)$ is an integer. Then $\log_{10} x \bmod 1 = \log S_{10}(x) \in [0,1)$, two numbers $x$ and $\widetilde{x}$ have the same leading digits if and only if they have the same significand, and the logarithms modulo one being equidistributed means that for a sequence $\{x_n\}$ with $y_n = \log_{10} x_n \bmod 1$ for any $[a,b) \subset [0,1]$ that \be \lim_{N\to\infty} \frac{\#\{n \le N:\ y_n \in [a,b]\}}{N} \ = \ b-a. \ee The equivalence of this equidistribution and Benford's law is immediate. Taking $[a,b) = [\log_{10}(d), \log_{10}(d+1))$ gives $b-a = \log_{10}(1 + 1/d)$, and the $y_n \in [a,b)$ are just those where the first digit of $x_n$ is $d$ (from exponentiating).

Often techniques from Fourier Analysis are very useful in proving Benford behavior; this is because we want to study logarithms modulo 1, and the exponential function \be \exp(2\pi i\theta)\ =\ \cos(2\pi\theta) + i \sin(2\pi\theta)\ee is ideally suited to such problems as we can drop the integer part of the argument without changing the value: \be \exp\left(2\pi i \log_{10} x_n\right) \ = \ \exp\left(2\pi i (\log_{10} x_n \bmod 1)\right). \ee Another common approach is to apply the Central Limit Theorem. For example, if we have a process that is a product of independent random variables, by taking logarithms we have a sum of related independent random variables. Frequently the Central Limit Theorem kicks in, and the resulting sum converges to a Gaussian whose variance diverges to infinity. If we look at these Gaussians modulo 1, they converge to the uniform distribution on $[0,1]$, and hence we again find Benford behavior; a good way to prove the convergence to the uniform is to apply Poisson Summation.

Finally, we note that instead of looking at just the first digit one can look at the distribution of the significand. A system is said to be strongly Benford if the probability of a significand of at most $s$ is $\log_{10} s$. Frequently such systems are called Benford and not strongly Benford; we follow that convention here. We end this subsection by recording a useful observation.

\begin{lem} \label{lem:benfordAsym}
If a sequence $\{a_n\}$ is Benford and $\lim_{n\to\infty} \left(b_n-a_n\right)=0$, then $\{b_n\}$ is Benford as well.
\end{lem}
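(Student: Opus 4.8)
The plan is to reduce to an equidistribution statement via Lemma \ref{lem:benfordEquidistr} and then invoke a standard perturbation principle for equidistribution modulo $1$. By Lemma \ref{lem:benfordEquidistr}, the hypothesis that $\{a_n\}$ is Benford is equivalent to $\{\log_{10} a_n\}$ being equidistributed modulo $1$, and the desired conclusion is equivalent to $\{\log_{10} b_n\}$ being equidistributed modulo $1$. So it suffices to show that passing from $x_n := \log_{10} a_n$ to $y_n := \log_{10} b_n$ preserves equidistribution.

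First I would establish the perturbation lemma: if $\{x_n\}$ is equidistributed modulo $1$ and $y_n - x_n \to 0$, then $\{y_n\}$ is equidistributed modulo $1$. The cleanest route is Weyl's criterion, which says $\{x_n\}$ is equidistributed modulo $1$ if and only if for every nonzero integer $m$ one has $\frac{1}{N}\sum_{n\le N}\exp(2\pi i m x_n) \to 0$. Writing
\be
\frac{1}{N}\sum_{n\le N} e^{2\pi i m y_n} - \frac{1}{N}\sum_{n\le N} e^{2\pi i m x_n} \ = \ \frac{1}{N}\sum_{n\le N} e^{2\pi i m x_n}\left(e^{2\pi i m(y_n - x_n)} - 1\right),
\ee
each summand is bounded in absolute value by $|e^{2\pi i m(y_n - x_n)} - 1|$, which tends to $0$ since $y_n - x_n \to 0$. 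A Cesàro/averaging argument (the finitely many initial terms contribute $O(1/N)$, and the tail terms are uniformly small) then shows the whole difference tends to $0$, so the Weyl sums for $\{y_n\}$ vanish exactly when those for $\{x_n\}$ do, giving the claim.

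The remaining, and main, step is to verify the hypothesis $y_n - x_n \to 0$, i.e.\ $\log_{10} b_n - \log_{10} a_n \to 0$. Here
\be
\log_{10} b_n - \log_{10} a_n \ = \ \log_{10}\frac{b_n}{a_n} \ = \ \log_{10}\left(1 + \frac{b_n - a_n}{a_n}\right),
\ee
so it suffices to show $(b_n - a_n)/a_n \to 0$ and then use continuity of $\log_{10}$ at $1$. This is where care is needed: the convergence $b_n - a_n \to 0$ alone does not control the ratio if $a_n$ can approach $0$, and indeed one can cook up perturbations tending to $0$ that destroy equidistribution when $a_n \to 0$. In the setting relevant to us the sequences grow without bound ($a_n \to \infty$, as the recurrence sequences studied here do), so $|a_n|$ is bounded away from $0$ for large $n$ and $(b_n - a_n)/a_n \to 0$ is immediate. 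The main obstacle is thus precisely ensuring this non-degeneracy of $a_n$; once it is in hand, the two displayed steps close the argument.
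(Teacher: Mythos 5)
The paper never proves this lemma: it is stated as a recorded observation, followed only by a remark that it fails when ``Benford'' is interpreted in the weak (first-digit-frequency) sense. So there is no argument in the paper to compare yours against; your proposal supplies the missing proof, and its two main ingredients are correct and standard. The reduction to equidistribution via Lemma~\ref{lem:benfordEquidistr}, combined with the Weyl-criterion perturbation principle (if $\{x_n\}$ is equidistributed mod $1$ and $y_n - x_n \to 0$, then $\{y_n\}$ is equidistributed mod $1$), is exactly the right mechanism, and your Ces\`aro splitting of the exponential sums (finitely many initial terms contribute $O(N_0/N)$, the tail terms are uniformly small) is valid.

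Your caveat in the final paragraph is not a defect of your write-up but a genuine defect of the statement itself, and you are right to isolate it: $b_n - a_n \to 0$ controls $\log_{10} b_n - \log_{10} a_n$ only when $|a_n|$ stays bounded away from zero. Concretely, $a_n = 10^{-n\alpha}$ with $\alpha$ irrational and positive is (strongly) Benford, yet $b_n = 10^{-\lceil n\alpha \rceil}$ satisfies $b_n - a_n \to 0$ and has significand identically $1$, so the lemma as literally stated is false. Note this is a different failure mode from the one the paper's own remark warns about (weak versus strong Benfordness): your obstruction persists even for strong Benfordness. The repair is to add the hypothesis $\liminf_{n\to\infty} |a_n| > 0$ (automatic for the growing recurrence sequences to which the paper applies the lemma), under which your two displayed steps close the argument. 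In that corrected form your proof is complete, and indeed more careful than the source.
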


The above lemma is false if the sequence is not strong Benford, because a tiny perturbation can influence the behavior of the leading digit of a Benford sequence. Our goal below is to highlight the main ideas behind one of the most common methods of proving Benford behavior, Weyl's Theorem, and apply it to recurrence relations.

\subsection{Results}

In this paper we concentrate on recurrence relations for several reasons. As they are discrete analogues of differential equations, they model many natural phenomena. Further, the proof for the case of linear recurrences of fixed depth and constant coefficients, which are very important cases, are easily analyzed. These have long been known to obey Benford's Law (see for example \cite{MT,NS}), and have applications ranging from the Fibonacci numbers to the stock market to analyzing gambling strategies to population dynamics. After briefly reviewing these proofs, we extend these results to new families of linear recurrences with non-constant coefficients and non-linear recurrences.

To motivate our question, we quickly review a representative example from mathematical biology. Consider a population where for simplicity there are only four groups: those just born, and those that are 1, 2 or 3 years old. Assume each pair that is one year old gives birth to two new pairs, and each pair that is two years old gives birth to one pair. If we let $a(n)$ denote the number of pairs of newborns at time $n$, $b(n)$ the number of pairs of one year olds at time $n$, and so on, we have the following relation:
\be \left(\begin{array}{c}
                        a(n+1)    \\
                        b(n+1)\\
                        c(n+1) \\
                        d(n+1)
                          \end{array}\right)  \ = \
\left(\begin{array}{cccc}
                        0  & 2 & 1 & 0 \\
                        1  & 0 & 0 & 0 \\
                        0  & 1 & 0 & 0 \\
                        0  & 0 & 1 & 0
                          \end{array}\right)
\left(\begin{array}{c}
                        a(n)    \\
                        b(n)\\
                        c(n) \\
                        d(n)
                          \end{array}\right). \ee
While the above model has the advantage of being mathematically tractable and we can write down a closed form expression for the population at time $n$, it suffers from unrealistic assumptions that the birth rate is constant every year, and that each member of the community never dies until year four, when they all die together. A more accurate model would replace the constants with random variables; here in the first row we might have variables with means respectively 2 and 1, while in the other rows they would probably be random variables with means a little below 1 (to account for natural deaths or predation):
\be \left(\begin{array}{c}
                        a(n+1)    \\
                        b(n+1)\\
                        c(n+1) \\
                        d(n+1)
                          \end{array}\right)  \ = \
\left(\begin{array}{cccc}
                        0  & \beta_1(n) & \beta_2(n) & 0 \\
                        \gamma_1(n)  & 0 & 0 & 0 \\
                        0  & \gamma_2(n) & 0 & 0 \\
                        0  & 0 & \gamma_3(n) & 0
                          \end{array}\right)
\left(\begin{array}{c}
                        a(n)    \\
                        b(n)\\
                        c(n) \\
                        d(n)
                          \end{array}\right). \ee

It is our desire to understand problems such as the above that motivated this work. We being by considering a simpler case, families of sequences generated by recurrence relations of the form
\begin{equation}
a_{n+1}\ =\ f(n)a_n + g(n)a_{n-1}.
\end{equation}
We are not able use any of the standard methods, such as characteristic polynomials, which work for linear recurrences, but we still want a closed form for the sequence. To this end, we introduce auxiliary functions $\lambda(n), \mu(n)$ satisfying
\begin{equation}
a_{n+1} - \lambda(n)a_n \ = \ \mu(n)(a_n - \lambda(n-1)a_{n-1}).
\end{equation}
These auxiliary functions make it possible to effectively reduce the degree of the recurrences when we consider the related sequence $\{a_n - \lambda(n-1)a_{n-1}\}$. This results in the closed form \begin{equation} a_{n+1} \ = \ \left(a_2 - \lambda(1)a_1 \right) \left(\sum_{k=2}^{n} \prod_{i=k+1}^{n}\lambda(i) \prod_{j=2}^{k}\mu(j)\right) + a_2\prod_{i=2}^{n}\lambda(i). \end{equation} Although this formula is not reasonable to work with directly, under certain conditions on $f$ and $g$ it splits into an error term and a main term. The error term converges to zero in the limit, and the main term is simple enough to work with, letting us analyze the Benfordness of the sequence $\{a_n\}$. Our main result is the following.

\begin{thm}\label{thm:main} Let $a_{n+1} = f(n)a_n + g(n)a_{n-1}$. Suppose the functions $f(n)$ and $g(n)$ satisfy $f(n)$ is non-decreasing and $$\lim_{n \to \infty} \frac{g(n)}{f(n)^2} \ =\ 0.$$ Then $\{a_n\}$ is Benford if and only if $\left(\prod_{i=1}^{n} \mu(i)\right)$ is Benford, where $\mu(n)$ is the auxiliary function described above.
\end{thm}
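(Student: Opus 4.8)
The plan is to work entirely at the level of equidistribution, using Lemma~\ref{lem:benfordEquidistr} to translate both Benfordness assertions into statements about $\{\log_{10}|a_n|\}$ and $\{\sum_{i=1}^{n}\log_{10}|\mu(i)|\}$ modulo one, and then to show that these two log-sequences differ by a constant plus a term tending to zero (taking absolute values throughout, following the introduction's convention, so that signs are harmless). Concretely, I would start from the closed form and factor the anticipated main term $\prod_{j=2}^{n}\mu(j)$ out of every summand, writing
\begin{equation}
a_{n+1} \ = \ \Big(\prod_{j=2}^{n}\mu(j)\Big)\, C_n, \qquad C_n \ = \ (a_2-\lambda(1)a_1)\Big(1+\sum_{k=2}^{n-1}\prod_{i=k+1}^{n}\tfrac{\lambda(i)}{\mu(i)}\Big) + a_2\prod_{i=2}^{n}\tfrac{\lambda(i)}{\mu(i)}.
\end{equation}
The whole argument then reduces to proving that the bracketed factor $C_n$ converges to a nonzero constant, since then $\log_{10}|a_{n+1}| = \sum_{i=2}^{n}\log_{10}|\mu(i)| + \log_{10}|C_n|$ differs from $\sum_{i=1}^{n}\log_{10}|\mu(i)|$ by a convergent sequence.

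The heart of the matter is the asymptotic size of the two auxiliary functions. The defining relation forces $f(n)=\lambda(n)+\mu(n)$ and $g(n)=-\mu(n)\lambda(n-1)$, so that eliminating $\mu$ gives the Riccati-type recurrence $\lambda(n)=f(n)+g(n)/\lambda(n-1)$. I would select $\lambda$ to be the recessive (subdominant) solution of this recurrence, namely the one tracking the smaller instantaneous root $\lambda(n)\approx -g(n)/f(n)$ rather than the dominant root $\approx f(n)$; for this choice $\mu(n)=f(n)-\lambda(n)\approx f(n)$ is the dominant root, which is exactly why $\prod\mu$ rather than $\prod\lambda$ governs the behavior. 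The key quantitative claim to establish is that, under the hypotheses that $f$ is nondecreasing and $g(n)/f(n)^2\to 0$, one has
\begin{equation}
\frac{\lambda(n)}{\mu(n)} \ \longrightarrow \ 0 ,
\end{equation}
which at leading order is the ratio $-g(n)/f(n)^2$ appearing in the hypothesis. Granting this, an eventual bound $|\lambda(i)/\mu(i)|\le r<1$ makes $\prod_{i=2}^{n}\lambda(i)/\mu(i)\to 0$ and makes the sum $\sum_{k=2}^{n-1}\prod_{i=k+1}^{n}\lambda(i)/\mu(i)$ tend to $0$ as well, since each of its terms is a tail product containing the vanishing factor $\lambda(n)/\mu(n)$ and the tails are dominated by a convergent geometric series; hence $C_n\to a_2-\lambda(1)a_1$.

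With $C_n\to C:=a_2-\lambda(1)a_1$ in hand, and provided the initial data avoid the exceptional line $a_2=\lambda(1)a_1$ (so that $C\neq 0$), we get $\log_{10}|C_n|\to\log_{10}|C|$, whence $\log_{10}|a_{n+1}|-\sum_{i=1}^{n}\log_{10}|\mu(i)|$ converges to a constant. Invoking the standard fact that equidistribution modulo one is preserved under a constant shift and under a perturbation tending to zero (the equidistribution statement underlying Lemma~\ref{lem:benfordAsym}), together with the harmless reindexing from $a_{n+1}$ to $a_n$, the sequence $\{\log_{10}|a_n|\}$ is equidistributed modulo one if and only if $\{\sum_{i=1}^{n}\log_{10}|\mu(i)|\}$ is; Lemma~\ref{lem:benfordEquidistr} then yields the claimed equivalence.

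I expect the single genuine obstacle to be the asymptotic estimate $\lambda(n)/\mu(n)\to 0$ and, underneath it, the rigorous selection of the recessive branch of the Riccati recurrence: the dominant branch is the attracting one under forward iteration, so pinning $\lambda$ to the repelling small-root solution requires either a backward-recursion/shooting argument or one carefully chosen value of $\lambda(1)$, and controlling the ratio of roots when $f$ grows quickly leans on the monotonicity of $f$ (and plausibly a mild additional regularity condition, of the ``usually met'' type). Everything after that estimate—the convergence of $C_n$ and the passage to equidistribution—is routine bookkeeping.
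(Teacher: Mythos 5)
Your outline retraces the paper's own proof almost step for step: your factorization $a_{n+1}=\bigl(\prod_{j=2}^{n}\mu(j)\bigr)C_n$ is exactly the paper's closed form \eqref{eq:closedanrn} with $r(n)=b_1\prod_{i=2}^{n}\mu(i)$ and $p(i)=\lambda(i)/\mu(i)$; the convergence of $C_n$ once $p(n)\to 0$ (each term in the sum is a tail product containing the vanishing factor $p(n)$, with tails controlled geometrically) is Lemma \ref{lem:whenrdominates}; the nondegeneracy condition $a_2\neq\lambda(1)a_1$ is Lemma \ref{bnonzero}, where the paper turns the logic around and chooses $\lambda(1)\neq a_2/a_1$ so the theorem holds for \emph{all} initial values rather than for initial data off an exceptional line; and the final passage to Benfordness via constant shifts and vanishing perturbations of equidistributed sequences is the concluding theorem of \S\ref{section:order2} together with Lemmas \ref{lem:benfordEquidistr} and \ref{lem:benfordAsym}. (Your multiplicative bookkeeping---ratio tending to a nonzero constant---is in fact the sharper reading of what the proof of Lemma \ref{lem:whenrdominates} actually establishes.) The genuine gap is the one step you explicitly defer: that the stated hypotheses, $f$ non-decreasing and $g(n)/f(n)^2\to 0$, force $\lambda(n)/\mu(n)\to 0$. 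This is not a side estimate; it is the entire content of the paper's Lemma \ref{lem:mainTermCond}, and it is the only place in the whole argument where the theorem's hypotheses enter. Without it, what you have proved is the conditional statement ``$\{a_n\}$ is Benford iff $\prod\mu(i)$ is, provided $\lambda(n)/\mu(n)\to 0$,'' which is not the theorem.

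It is worth comparing how the paper fills that gap with how you propose to. The paper's Lemma \ref{lem:mainTermCond} is purely algebraic and involves no branch selection: from $f(n)=(1+p(n))\mu(n)$ and $g(n)=-p(n-1)\mu(n-1)\mu(n)$ one gets
\[
\frac{g(n)}{f(n)^2}\ =\ -\frac{p(n)}{(1+p(n))^2}\cdot\frac{\lambda(n-1)}{\lambda(n)}\ =\ -\frac{p(n-1)}{(1+p(n))^2}\cdot\frac{\mu(n-1)}{\mu(n)},
\]
and monotonicity of $f$ guarantees that for each $n$ at least one of $\lambda(n-1)/\lambda(n)$, $\mu(n-1)/\mu(n)$ is at most $1$; both implications are then read off from these identities. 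You instead propose to pin $\lambda$ to the recessive branch of $\lambda(n)=f(n)+g(n)/\lambda(n-1)$ by a shooting or backward-recursion argument---something the paper never does, since Lemma \ref{solvinglambdamu} leaves $c=\lambda(1)$ essentially arbitrary. Your instinct here points at a real subtlety rather than an imaginary one: for generic $c$ the constructed $\lambda_c$ tracks the dominant root (take $f(n)=n$, $g(n)=1$: any $\lambda(1)>0$ gives $\lambda(n)\sim n$, $\mu(n)\sim -1/n$, so $p(n)\to-\infty$ even though $g/f^2=1/n^2\to 0$), and $x/(1+x)^2\to 0$ is compatible with $|x|\to\infty$ as well as with $x\to 0$, so the direction of Lemma \ref{lem:mainTermCond} that the theorem needs really does depend on working with a good choice of $\lambda(1)$. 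So the estimate you postponed is simultaneously the missing step of your proposal and the delicate point of the paper's own argument; until you carry out the branch-selection argument you sketch (or adapt the paper's algebraic identity to a correctly chosen branch), the proposal is incomplete.
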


Section \ref{section:examples} gives some examples of recurrent sequences which our results show are Benford, including cases when $f(n)$ and $g(n)$ are random variables. We give two representative examples here. The coefficients for the recurrence relations are deterministic functions in the first and random variables in the second.

\begin{exa} \label{ex:introFctn}
If $\mu(k) = \exp(\alpha h(k))$ where $\alpha$ is irrational and $h(k)$ is a monic polynomial, then $\{a_n\}_{n=1}^{\infty}$ follows Benford's law.
\end{exa}

\begin{exa} \label{ex:introRV}
Suppose $\mu(n) \sim h(n) U_n$, where $U_n$ are independent random variables uniformly distributed on $[0, 1]$ and $h(n)$ is a deterministic function in $n$ such that $\prod_{i=1}^{n} h(i)$ is Benford. Then $\{a_n\}_{n=1}^{\infty}$ follows Benford's Law.
\end{exa}

In Section \ref{section:higherDegree} we show that this result can be generalized to higher-degree recurrences $a_{n+1} = f_1(n)a_n + f_2(n)a_{n-1} + \cdots + f_{k}(n)a_{n-k+1}$. 
In Section \ref{section:multi} we formulate analogous results on Benford behavior of sequences generated by multiplicative recurrence relations \be A_{n+1} \ =\ A_n^{f_1(n)} A_{n-1}^{f_2(n)} \cdots A_{n-k+1}^{f_k(n)}.\ee Using the closed form of the sequence generated by its corresponding linear recurrence 
we find that the sequence $\{A_n\}$ is Benford if and only if the main term of $\{a_n\}$ is equidistributed modulo 1.

\begin{acknowledgements}
Much of the analysis was done during the 2018 Williams College SMALL REU Program, and we are grateful to our colleagues there, as well as to participants of the Conference on Benford's Law Application in Stresa, Italy, for helpful comments.
\end{acknowledgements}

%
%

\section{Fixed Depth Constant Coefficient Linear Recurrences}

We briefly review the theory of fixed depth constant coefficient linear recurrences (see \cite{MT} for complete details); these are relations of the form \be\label{eq:generalfixeddepthconstlinear} a_{n+1} \ = \ c_1 a_n + \cdots + c_L a_{n+1-L}, \ee where $c_1, \dots, c_n$ are fixed complex numbers and $L$ is a positive integer. We first quickly derive a tractable closed form expression for the solutions, and then show that for most recurrences and most initial conditions, one has Benford behavior.

It has long been known that almost all sequences defined by linear recurrences with constant coefficients and fixed depth obey Benford's law. The main ingredient in these proofs is Weyl's equidistribution theorem (see for example \cite{MT}).

\begin{thm}[Equidistribution Theorem]\label{thm:weylequidistributionkis1} If $\alpha$ is irrational, then the fractional parts of $n \alpha \bmod 1$ are equidistributed. \end{thm}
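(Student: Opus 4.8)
The plan is to prove this via \emph{Weyl's criterion}, which recasts equidistribution as a statement about the decay of exponential sums: a sequence $\{x_n\}$ is equidistributed modulo $1$ if and only if for every nonzero integer $h$,
\[ \lim_{N\to\infty} \frac{1}{N} \sum_{n=1}^{N} \exp(2\pi i h x_n) \ = \ 0. \]
Granting this criterion, the theorem reduces to a direct computation with $x_n = n\alpha$, exactly the kind of sum the exponential function in the paper's introduction is designed to handle.

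First I would verify the criterion for $x_n = n\alpha$. Fix a nonzero integer $h$. Since $\alpha$ is irrational, $h\alpha \notin \Z$, so $\exp(2\pi i h\alpha) \neq 1$ and the sum is a finite geometric series:
\[ \sum_{n=1}^{N} \exp(2\pi i h n \alpha) \ = \ \exp(2\pi i h\alpha) \cdot \frac{\exp(2\pi i h N\alpha) - 1}{\exp(2\pi i h\alpha) - 1}. \]
The numerator is bounded in absolute value by $2$, so the entire sum is bounded by $2/|\exp(2\pi i h\alpha) - 1|$, a constant independent of $N$. Dividing by $N$ and letting $N\to\infty$ forces the average to $0$, which is precisely Weyl's criterion. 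The irrationality of $\alpha$ enters exactly here, guaranteeing the denominator never vanishes; for rational $\alpha$ the sum would fail to have zero average for suitable $h$, and indeed the sequence would not be equidistributed.

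The substantive part of the argument, and where I expect the main obstacle, is establishing Weyl's criterion itself. One direction is routine: equidistribution is the statement that averages of indicator functions of subintervals converge to their lengths, and by linearity this extends to step functions and then to $\exp(2\pi i h t)$, whose integral over $[0,1)$ is $0$ for $h \neq 0$. The harder direction---that the vanishing of all these sums implies equidistribution---proceeds by approximation. Using that the averaging condition extends by linearity from the functions $\exp(2\pi i h t)$ to all trigonometric polynomials, and then invoking the Weierstrass approximation theorem to approximate any continuous $1$-periodic function uniformly by such polynomials, one shows that the averages of $\varphi(x_n)$ converge to $\int_0^1 \varphi$ for every continuous periodic $\varphi$. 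Finally I would sandwich the indicator function of an interval $[a,b)$ between continuous periodic functions that agree with it except on arbitrarily small neighborhoods of the endpoints; the uniform estimate on continuous test functions then squeezes the counting average to $b-a$, recovering equidistribution and completing the proof. The one technical point deserving care is this last $\varepsilon$-sandwich, since the indicator is discontinuous at the endpoints and the approximation must be controlled uniformly in $N$.
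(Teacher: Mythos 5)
Your proposal is correct and follows essentially the same route as the paper's proof: the geometric-series computation exploiting $e^{2\pi i h\alpha}\neq 1$ for irrational $\alpha$, the extension to fixed trigonometric polynomials by linearity, and the sandwiching of $\chi_{[a,b)}$ between continuous periodic approximants. The only cosmetic difference is that you package the middle steps as ``Weyl's criterion'' and cite the trigonometric Weierstrass approximation theorem, where the paper inlines those steps and uses Fej\'er's theorem for the uniform approximation --- the same tool in different clothing.
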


In addition to being sufficient, this condition is also necessary; if $\alpha$ is rational, say $\alpha = p/q$, then $n\alpha \bmod 1$ only takes on finitely many values (in this case, at most $q$).

For example, if $a_{n+1} = 2^n$ and $a_1 = 1$ then $a_n = 2^n$. To see if it is Benford, we compute \be y_n \ = \ \log_{10}(2^n) \bmod 1 \ = \ n \log_{10} 2 \bmod 1; \ee thus $\{2^n\}$ is Benford base 10 as $\log_{10} 2$ is irrational.\footnote{If $\log_{10} 2$ equals $p/q$ then $2 = 10^{p/q}$, so $2^{q-p} = 5^p$ and thus $p = q-p = 0$, which is impossible.} Not surprisingly, if instead we had $a_{n+1} = 100 a_n$ then $a_n = 100^n = 10^{2n}$ which is clearly not Benford, as each number has first digit 1; note $\log_{10}(100) = 2$, which is rational and not irrational.

As one of our goals is to highlight how to prove Benford behavior, we sketch the proof of Weyl's Equidistribution Theorem in Appendix \ref{sec:proofweyl}.

\subsection{Generalized Binet's Formula}

The simplest case of \eqref{eq:generalfixeddepthconstlinear} is when $L=1$, in which case \be a_{n+1} \ = \ c a_n, \ee which has the solution $a_n = c a_1^n$.

Depth one constant coefficient linear recurrences are trivially solved, as we have $a_n = c r^n$ for some constants $c$ and $r$, and $\{a_n\}$ will be Benford if and only if $\log_{10} r$ is irrational. For the general case as in \eqref{eq:generalfixeddepthconstlinear}, we have a similar relation. The most famous depth two relation is the Fibonacci, where $F_{n+1} = F_n + F_{n-1}$ and $F_1 = F_2 = 2$; in this case we find \be\label{eq:binet} F_{n+1} \ = \ \frac1{\sqrt{5}}\left(\frac{1+\sqrt{5}}{2}\right)^n -  \frac1{\sqrt{5}}\left(\frac{1-\sqrt{5}}{2}\right)^n. \ee We briefly sketch the proof, and then discuss how to generalize to other recurrences.

As $F_n \ge F_{n-1}$ we have \be 2 F_{n-1} \ \le \ F_{n+1} \ \le \ 2 F_n. \ee Thus the first inequality tells us every time $n$ increases by 2 our number at least doubles, so $F_n \ge \sqrt{2}^n$, while the second inequality tells us every time $n$ increases by 1 our number at most doubles, so $F_n \le 2^n$. As $F_n$ is sandwiched between two exponentially growing functions, it is reasonable to guess that $F_n$ equals $r^n$. Substituting that into the recurrence, we get \be r^{n+1} \ = \ r^n + r^{n-1}, \ee which leads to the \emph{characteristic polynomial} \be r^2 - r - 1 \ = \ 0, \ee which has solutions \be r_1 \ := \ \frac{1+\sqrt{5}}{2}\ \ \ {\rm and} \ \ \ r_2 \ := \ \frac{1-\sqrt{5}}{2}. \ee As we have a linear relation, note that any linear combination of solutions is a solution, and thus the most general solution to the Fibonacci recurrence is \be F_n \ = \ \gamma_1 r_1^n + \gamma_2 r_2^n. \ee To determine $c_1$ and $c_2$ we just use the initial conditions that $F_1 = F_2 = 1$ (or $F_0 = 0$ and $F_1 = 1$). After some straightforward algebra, we reach \eqref{eq:binet}, which is known as \emph{Binet's Formula}.

A similar formula holds for the more general recurrence in \eqref{eq:generalfixeddepthconstlinear}. We again try $a_n = r^n$, and obtain the characteristic polynomial \be r^L - c_1 r^{L-1} - \cdots - c_2 r - c_1 \ = \ 0. \ee If this polynomial has $L$ distinct roots, then given any set of $L$ initial conditions there are $L$ constants $\gamma_1, \dots, \gamma_L$ such that \be a_n \ = \ \gamma_1 r_1^n + \cdots + \gamma_L r_L^n. \ee If there are repeated roots the above must be modified slightly; while we will only discuss distinct roots in the next subsection, for completeness we state the general case. If the roots are $r_1, \dots, r_k$ with multiplicities $m_1, \dots, m_k$ then the general solution is \bea a_n & \ = \ & \left(\gamma_{1,1} n^{m_1-1} + \gamma_{1,2} n^{m_1-2} + \cdots + \gamma_{1,m_1}\right) r_1^n + \cdots \nonumber\\ & &  \ \ \ \ \ \ \ \ \ \ \ \cdots\ +  \left(\gamma_{k,1} n^{m_k-1} + \gamma_{k,2} n^{m_k-2} + \cdots + \gamma_{k,m_k}\right) r_k^n. \eea

We quickly motivate this answer by considering a depth two relation with the two roots equal. We slightly perturb the recurrence (by some parameter $\epsilon$) so the two roots are different, and we can write the general solution as \bea a_n(\epsilon) & \ = \ & \gamma_1(\epsilon) r_1(\epsilon)^n + \gamma_2 r_2(\epsilon)^n \nonumber\\ & \ = \ &  \widetilde{\gamma_1}(\epsilon)\frac{r_1(\epsilon)^n - r_2(\epsilon)^n}{r_1(\epsilon) - r_2(\epsilon)} + \widetilde{\gamma_2}(\epsilon)\frac{r_1(\epsilon)^n + r_2(\epsilon)^n}{r_1(\epsilon) + r_2(\epsilon)}. \eea If we take the limit as the perturbation tends to zero, the first term converges to $n r^{n-1}$ while the second converges to $r^n$, which highlights where the polynomials arise.

\subsection{Benford Behavior}

We follow the presentation in \cite{MT}. We concentrate on the simpler case with distinct roots, though with slightly more effort one could handle the case of repeated roots. Note that almost surely a random polynomial has distinct roots (and similarly for the other conditions we assume below). We do need to assume the largest root is not of absolute value 1, as if that happened we could have all the terms of approximately the same magnitude.

\begin{thm}\label{thmrecisben} Let $a_n$ satisfy the recurrence relation \eqref{eq:generalfixeddepthconstlinear} and assume there are $L$
distinct roots. Assume $|r_1| \neq 1$ with $|r_1|$ the
largest absolute value of the roots. Further, assume the initial
conditions are such that the coefficient of $r_1$ is non-zero in the Generalized Binet Formula expansion of $a_n$. If
$\log_{10} |r_1| \not\in \Q$, then $a_n$ is Benford.
\end{thm}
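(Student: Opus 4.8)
The plan is to reduce the Benfordness of $a_n$ to an equidistribution statement about the logarithm of the dominant term $\gamma_1 r_1^n$, and then invoke Weyl's Theorem (Theorem~\ref{thm:weylequidistributionkis1}). By Lemma~\ref{lem:benfordEquidistr}, showing $\{a_n\}$ is Benford is equivalent to showing $\{\log_{10} a_n \bmod 1\}$ is equidistributed, so the entire argument will be recast in terms of equidistribution.

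First I would write the Generalized Binet Formula expansion $a_n = \gamma_1 r_1^n + \gamma_2 r_2^n + \cdots + \gamma_L r_L^n$, valid by the hypothesis of $L$ distinct roots. Reordering so that $|r_1| \ge |r_2| \ge \cdots \ge |r_L|$ and using that $\gamma_1 \neq 0$ by assumption, I would factor out the dominant term to write
\begin{equation}
a_n \ = \ \gamma_1 r_1^n \left(1 + \sum_{j=2}^{L} \frac{\gamma_j}{\gamma_1}\left(\frac{r_j}{r_1}\right)^n\right).
\end{equation}
Since $|r_1|$ is strictly the largest absolute value, each ratio satisfies $|r_j/r_1| < 1$, so the parenthesized sum tends to $1$ as $n \to \infty$. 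Taking logarithms base $10$ gives
\begin{equation}
\log_{10} a_n \ = \ \log_{10}|\gamma_1| + n \log_{10}|r_1| + \log_{10}\left|1 + \sum_{j=2}^{L} \frac{\gamma_j}{\gamma_1}\left(\frac{r_j}{r_1}\right)^n\right|,
\end{equation}
where the final term converges to $\log_{10} 1 = 0$ as $n \to \infty$ (assuming $a_n$ is real and positive; in the complex case one replaces $a_n$ by $|a_n|$ throughout, as noted in the introduction). Thus $\log_{10} a_n = n\log_{10}|r_1| + C + o(1)$ for a constant $C = \log_{10}|\gamma_1|$.

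Next I would apply Weyl's Equidistribution Theorem to the sequence $n\log_{10}|r_1| + C$. Since $\log_{10}|r_1| \notin \Q$ by hypothesis, Theorem~\ref{thm:weylequidistributionkis1} shows that $\{n\log_{10}|r_1| \bmod 1\}$ is equidistributed, and adding the fixed constant $C$ merely translates the sequence on the circle, preserving equidistribution. Hence $\{n\log_{10}|r_1| + C \bmod 1\}$ is equidistributed, so this sequence is Benford by Lemma~\ref{lem:benfordEquidistr}. Finally, because $\log_{10} a_n - (n\log_{10}|r_1| + C) \to 0$, Lemma~\ref{lem:benfordAsym} transfers the Benford property from the dominant-term sequence to $\{a_n\}$ itself, completing the proof.

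The main obstacle is the careful justification that the error term $\log_{10}|1 + \sum_j (\gamma_j/\gamma_1)(r_j/r_1)^n|$ genuinely tends to zero and that Lemma~\ref{lem:benfordAsym} applies cleanly. One must confirm the argument of the logarithm stays bounded away from $0$ for all large $n$ so the logarithm is well-defined and small; this follows from $|r_j/r_1| < 1$ forcing the sum to $1$, but the estimate deserves a line. A subtlety worth flagging is that Lemma~\ref{lem:benfordAsym} is stated for sequences whose difference tends to zero and requires the limiting sequence to be (strongly) Benford for the perturbation to be harmless. Since $\{n\log_{10}|r_1| + C\}$ is equidistributed, it is in fact strongly Benford, so the hypothesis of Lemma~\ref{lem:benfordAsym} is met and the perturbation argument is valid; I would make this point explicit to avoid the pitfall noted in the remark following that lemma.
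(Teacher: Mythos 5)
Your route is the same as the paper's: expand $a_n$ by the Generalized Binet Formula, factor out the dominant term $\gamma_1 r_1^n$, take logarithms to get $\log_{10} a_n = n\log_{10}|r_1| + \log_{10}|\gamma_1| + o(1)$, and then apply Weyl's Theorem \ref{thm:weylequidistributionkis1} together with invariance of equidistribution under translation by a constant. Up to that point your argument matches the paper's step for step.

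The gap is in your final transfer step. Lemma \ref{lem:benfordAsym} hypothesizes that the difference of the \emph{sequences} tends to zero, $b_n - a_n \to 0$; what you have established is only that the difference of their \emph{logarithms} tends to zero. These are not the same thing: writing $c_n = \gamma_1 r_1^n$ for the dominant-term sequence, the actual difference is
\begin{equation}
a_n - c_n \ = \ \gamma_2 r_2^n + \cdots + \gamma_L r_L^n,
\end{equation}
which diverges whenever $|r_2| > 1$ (for instance $a_{n+1} = 5a_n - 6a_{n-1}$, with roots $3$ and $2$), so the lemma as stated simply does not apply in the typical case. This is exactly the point where the paper does its real work: it proves the needed stability directly, showing that an additive error of size $O(\beta^n)$ with $\beta < 1$ can change the leading digit only when $n\log_{10}r_1 + \log_{10}\gamma_1 \bmod 1$ falls within $C\beta^n$ of a boundary point $\log_{10} j$, a set of measure at most $2C\beta^n$, and then using equidistribution of the unperturbed sequence to conclude that the proportion of such $n$ vanishes in the limit. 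What your proof actually needs is this fact --- equidistribution mod $1$ is preserved under perturbations tending to zero, equivalently a multiplicative version of Lemma \ref{lem:benfordAsym} (if $\{c_n\}$ is Benford and $a_n/c_n \to 1$ then $\{a_n\}$ is Benford) --- which is true and standard, but inside this paper it is not a quotable lemma; it must be proved, and the paper's digit-boundary argument is precisely that proof. Your closing remark about strong Benfordness shows you sensed a subtlety here, but the additive-versus-multiplicative mismatch is the one that actually bites; once you state and prove the stability lemma (a few lines via the interval argument, or via Weyl's criterion), your proof is complete and coincides with the paper's.
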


\begin{proof} By the generalized Binet formula we have for any set of initial conditions that there exist constants $\gamma_1, \dots, \gamma_L$ such that \be a_n \ = \ \gamma_1 r_1^n + \cdots + \gamma_L r_L^n, \ee where the $r_i$ are the roots of the characteristic polynomial. By assumption, $\gamma_1 \neq 0$.
For simplicity we assume $r_1 > 0$, $r_1 > |r_2|$ and $\gamma_1 > 0$. Set $y_n = \log_{10} a_n$. It suffices to show $y_n$ is equidistributed modulo
$1$ to prove that $a_n$ is Benford. We have\footnote{We are using big-Oh notation: $f(x) = O(g(x))$ at infinity if there exists an $x_0$ and a $C>0$ such that for all $x \ge x_0$ we have $|f(x)| \le C g(x)$; it is big-Oh at zero if instead it is true for all $x \le x_0$.} \bea\label{eq:mainbigoh} a_n & \ = \ &  \gamma_1 r_1^n \left[1 + O\left(
\frac{L \gamma r_2^n}{r_1^n} \right) \right], \eea where $\gamma = \max_i
|\gamma_i| + 1$ (so $L\gamma > 1$ and the big-Oh constant is $1$).  The idea is to borrow some of the growth from $r_1^n$ to show the main term in \eqref{eq:mainbigoh} is much larger than the secondary term, and thus almost all of the time the leading digits are determined by the main term. To do this, we choose a small $\epsilon$ (which is positive if $r_1 > 1$ and negative if $r_1 < 1$) and an $n_0$ such that
\ben \item $|r_2| < r_1^{1-\epsilon}$, and \item for all $n > n_0$,
$(L\gamma)^{1/n}/r_1^\epsilon < 1$. \een

As $L\gamma > 1$, $(L\gamma)^{1/n}$ is  decreasing to $1$ as $n$
tends to infinity. Note $\epsilon > 0$ if $r_1 > 1$ and $\epsilon < 0$
if $r_1 < 1$. Letting \be \beta \ := \
\frac{(L\gamma)^{1/n_0}}{r_1^\epsilon} \frac{|r_2|}{r_1^{1 - \epsilon}} \
< \ 1, \ee we find that the error term above is bounded by $\beta^n$ for $n
> n_0$, which tends to $0$.

We take logarithms, and will use $\log(1+x) = x + O(x^2)$ as $x\to 0$. Therefore
\bea y_n & \ = \ & \log_{10} a_n \nonumber\\ & = & \log_{10}(\gamma_1 r_1^n)
+ O\left( \log_{10} (1 + \beta^n) \right) \nonumber\\ & = & n \log_{10}
r_1 + \log_{10} \gamma_1 + O(\beta^n), \eea where the big-Oh constant is
bounded by $C$ say. As $\log_{10} r_1 \not\in \Q$, the fractional
parts of $n \log_{10} r_1$ are equidistributed modulo $1$, and hence
so are the shifts obtained by adding the fixed constant $\log_{10}
u_1$.

To complete the proof, we have to show the error term $O(\beta^n)$ is negligible. The problem is that
it can change the first digit; for
example, if we had $999999$ (or $1000000$), then if the error term
contributes $2$ (or $-2$), we would change the first digit. We thus need some weak control over how often this can happen. For $n$ sufficiently large, the error term will
change a vanishingly small number of first digits. To see this, suppose  $n \log_{10}
r_1 + \log_{10} \gamma_1$ exponentiates to first digit $j-1$. This means \be n \log_{10} r_1 + \log_{10} \gamma_1 \
\in \ [p_{j-1},p_j)., \ee where $p_k = \log_{10} k$. As the error term is at most
$C\beta^n$, $y_n$ exponentiates to a different first digit than
$n \log_{10} r_1 + \log_{10} u_1$ only if one of the following holds:

\ben \item $n \log_{10} r_1 + \log_{10} \gamma_1$ is within $C\beta^n$ of
$p_j$, and adding the error term pushes us to or past $p_j$; \item
$n \log_{10} r_1 + \log_{10} \gamma_1$ is within $C\beta^n$ of $p_{j-1}$,
and adding the error term pushes us before $p_{j-1}$. \een

The first set is contained in $[p_j - C\beta^n, p_j)$, of length
$C\beta^n$. The second is contained in $[p_{j-1}, p_{j-1} +
C\beta^n)$, also of length $C\beta^n$. Thus the length of the
interval where $n \log_{10} r_1 + \log_{10} \gamma_1$ and $y_n$ could
exponentiate to different first digits is of size at most $2C
\beta^n$. If we choose $N$ sufficiently large then for all $n > N$
we can make these lengths arbitrarily small. As $n \log_{10} r_1 +
\log_{10} \gamma_1$ is equidistributed modulo $1$, we can control the size
of the subsets of $[0,1)$ where $n \log_{10} r_1 + \log_{10} \gamma_1$ and
$y_n$ disagree. The Benford behavior of $a_n$ now follows
in the limit. \hfill $\Box$ \end{proof}

\section{Linear Recurrence Relations with Non-constant Coefficients} \label{section:order2}

\subsection{Set-up}

Building on our successful analysis of recurrence relations with constant coefficients, we turn to our new results for recurrences with non-constant coefficients. We start with recurrences of the form
\begin{equation} \label{eq:recurWithfg}
a_{n+1} \ = \ f(n)a_n + g(n)a_{n-1}
\end{equation}
where $f$ and $g$ are fixed functions and $g$ is never zero, and we choose initial values $a_1$ and $a_2$. We explore conditions on $f$ and $g$ such that the sequence generated obeys Benford's Law for all non-zero initial values.

We begin by introducing auxiliary functions $\lambda$ and $\mu$ and reduce \eqref{eq:recurWithfg} into a new recurrence with lower degree. This idea is similar in spirit to the approaches to solve the cubic and quartic by looking at related polynomials with lower degree.\footnote{Unlike our case, for polynomials this process breaks down for degree five and higher.} The goal is to obtain a recurrence relation where $a_{n+1}$ only depends on $a_{n}$ and these new functions, as then we can immediately read off solutions. Suppose there were $\lambda(n),\mu(n)$ such that
\begin{equation} \label{eq:recurWithLamMu}
a_{n+1} - \lambda(n)a_n \ = \ \mu(n)(a_n - \lambda(n-1)a_{n-1})
\end{equation}
for $n \geq 2$. Now we can define an auxiliary sequence $\{b_n\}_{n=1}^\infty$ by
\begin{equation}
b_n \ = \ a_{n+1} - \lambda(n)a_n
\end{equation}
for $n \geq 1$. We get recurrence relations of degree $1$ for both $\{a_n\}$ and $\{b_n\}$:
\begin{equation} \label{eq:recuran}
a_{n+1} \ = \ \lambda(n)a_n + b_n
\end{equation}
and
\begin{equation} \label{eq:recurbn}
b_n \ = \ \mu(n)b_{n-1}.
\end{equation}
These recurrence relations with lower degree make the following computations much easier.

We simplify \eqref{eq:recurWithLamMu} to
\begin{equation}
a_{n+1} \ = \ (\lambda(n) + \mu(n))a_n - \mu(n)\lambda(n-1)a_{n-1}.
\end{equation}
Comparing coefficients with those of the original recurrence relation, we see that it suffices for $\lambda(n),\mu(n)$ to satisfy
\begin{eqnarray} \label{fgfromlambdamu}
	f(n)\ &\ =\ & \lambda(n)+\mu(n), \nonumber \\
    g(n)\ &=& -\lambda(n-1)\mu(n).
\end{eqnarray}

Therefore, if given $f(n)$ and $g(n)$ we can find such functions $\lambda$ and $\mu$, we obtain recurrence relations of degree 1. In Lemma \ref{solvinglambdamu} we prove that functions $\lambda$ and $\mu$ always exist for any given pair $f(n),g(n)$, and in Lemma \ref{bnonzero} we show that they may be chosen so the sequence $\{b_n\}$ is non-vanishing. We remark that the functions $\lambda$ and $\mu$ will not be unique; in fact there will be infinitely many, parametrized by a real number. We move these calculations to \S\ref{section:solveCoeff} so as not to interrupt the flow of the proof, as the constructions are straightforward.

We proceed to solve for the closed form of $\{a_n\}$ in terms of $\lambda$ and $\mu$. By \eqref{eq:recurbn}, we have
\begin{equation} \label{eq:closedbn}
b_n \ = \ \mu(n)b_{n-1} \ = \ \mu(n)\mu(n-1)b_{n-2} \ = \ \cdots \ = \ \left(\prod_{i = 2}^{n} \mu(i)\right) b_1.
\end{equation}

By \eqref{eq:recuran}, we get
\begin{equation} \label{eq:recurannew}
\frac{a_{n+1}}{\prod_{i=1}^{n} \lambda(i)} \ = \ \frac{a_{n}}{\prod_{i=1}^{n-1} \lambda(i)} + \frac{b_{n}}{\prod_{i=1}^{n} \lambda(i)}.
\end{equation}

In the previous recurrence, we replace $n$ with $1, 2, \dots, n-1$ and substitute the results into the RHS of equation \eqref{eq:recurannew}. This gives
\begin{equation}
\frac{a_{n+1}}{\prod_{i=1}^{n} \lambda(i)} \ = \ \frac{a_2}{\lambda(1)} + \sum_{i = 2}^{n} \frac{b_i}{\prod_{j = 1}^{i}\lambda(j)}.
\end{equation}

We then multiply through by the denominator $\prod_{i=1}^{n} \lambda(i)$ and substitute in the closed form for $b_i$ from \eqref{eq:closedbn}. This gives us the closed form of the sequence $\{a_n\}$,
\begin{equation} \label{eq:closedan}
a_{n+1} \ = \ b_1 \left(\sum_{k=2}^{n} \prod_{i=k+1}^{n}\lambda(i) \prod_{j=2}^{k}\mu(j)\right) + a_2\prod_{i=2}^{n}\lambda(i).
\end{equation}

To simplify notation we define \be r(n) \ :=\ b_1\prod_{i=2}^{n} \mu(i) \ \ \ {\rm and}\ \ \  p(n)\ :=\ \frac{\lambda(n)}{\mu(n)};\ee we know $\mu$ is non-vanishing as $g$ is never zero and $g(n) = -\lambda(n-1)\mu(n)$. Under this notation, we rewrite the closed form of $\{a_n\}$ as
\begin{align}
a_{n+1}\ &=\ r(n)\left(1 + \frac{\lambda(n)}{\mu(n)} + \frac{\lambda(n)\lambda(n-1)}{\mu(n)\mu(n-1)} + \cdots + \frac{a_2}{b_1}\frac{\lambda(n)\cdots\lambda(2)}{\mu(n)\cdots\mu(2)}\right)\nonumber \\
    \ &=\ r(n)\left(1 + \sum_{k=3}^{n} \prod_{i=k}^{n} p(i) + \frac{a_2}{b_1} \prod_{i=2}^{n} p(i) \right). \label{eq:closedanrn}
\end{align}

\subsection{Analysis of Main and Secondary Terms}

We now perform an asymptotic analysis and show, for suitable choices of $\mu$ and $\lambda$, that the main term dominates and the behavior is equivalent to equidistribution problems that are previously studied or tractable. We give further conditions on $p(n)$ such that $a_{n+1}$ is asymptotically equivalent to $r(n)$ as $n\to\infty$.

\begin{lem}\label{lem:whenrdominates}
Let $p(n)$ be a function from $\N$ to $\R$ such that $\lim_{n \to \infty} p(n) = 0$. Then $\lim_{n\to\infty} \left(a_{n+1} - r(n)\right) = 0$.
\end{lem}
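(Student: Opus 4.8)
The plan is to start from the closed form \eqref{eq:closedanrn} and isolate the difference explicitly. Subtracting $r(n)$ from both sides gives
$$a_{n+1} - r(n) \ = \ r(n)\left(\sum_{k=3}^{n}\prod_{i=k}^{n}p(i) \ + \ \frac{a_2}{b_1}\prod_{i=2}^{n}p(i)\right) \ =: \ r(n)E_n,$$
so everything reduces to understanding the bracketed error sum $E_n$ together with the prefactor $r(n)$. Before estimating, I would record the exact algebraic identity that makes the difference transparent: by \eqref{eq:closedbn} we have $b_n = \big(\prod_{i=2}^{n}\mu(i)\big)b_1 = r(n)$, and by \eqref{eq:recuran} we have $a_{n+1} = \lambda(n)a_n + b_n$, so the difference collapses to
$$a_{n+1} - r(n) \ = \ a_{n+1} - b_n \ = \ \lambda(n)a_n.$$
Thus proving the lemma is exactly proving $\lambda(n)a_n \to 0$, and the two displays must agree, i.e.\ $\lambda(n)a_n = r(n)E_n$; this cross-check is a useful safeguard against sign or indexing errors.

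The first substantive step is to show the relative error $E_n \to 0$ using only $p(n)\to 0$. Given $\epsilon \in (0,1)$, I would pick $N$ with $|p(i)| < \epsilon$ for all $i \ge N$, and for $n > N$ split $\sum_{k=3}^{n}\prod_{i=k}^{n}p(i)$ at $k=N$. The tail $k \ge N$ is bounded by the geometric sum $\sum_{k=N}^{n}\epsilon^{\,n-k+1} < \epsilon/(1-\epsilon)$, while each of the fewer than $N$ head terms ($3 \le k < N$) carries the full factor $\prod_{i=N}^{n}|p(i)| \le \epsilon^{\,n-N+1}$ and so tends to $0$ as $n\to\infty$; the same estimate controls $\tfrac{a_2}{b_1}\prod_{i=2}^{n}p(i)$. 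Taking $\limsup_n|E_n| \le \epsilon/(1-\epsilon)$ and letting $\epsilon\to 0$ yields $E_n\to 0$. In fact the dominant contribution is the $k=n$ term $p(n)$, so one obtains the sharper $E_n = O\big(\sup_{k\ge m}|p(k)|\big)$, i.e.\ $E_n$ is comparable to $p(n)$ for large $n$.

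The hard part will be converting this relative smallness into the stated absolute convergence. From $a_{n+1}-r(n) = r(n)E_n$ one gets $a_{n+1}-r(n)\to 0$ for free only when $r(n) = b_1\prod_{i=2}^{n}\mu(i)$ stays bounded, whereas in the cases of real interest (where $\mu$, and hence $f\approx\mu$, grows) $r(n)$ is unbounded and $r(n)E_n$ need not vanish: with $\mu\equiv 2$ and $\lambda(n)=1/n$ one has $p(n)=1/(2n)\to 0$ yet $\lambda(n)a_n \sim 2^{n-2}/n \to \infty$. The decisive step is therefore a genuine growth estimate pinning $\lambda(n)a_n = r(n)E_n$ down to $o(1)$; using $E_n = O(p(n))$ it suffices to prove $p(n)r(n) = \lambda(n)\,b_1\prod_{i=2}^{n-1}\mu(i) \to 0$. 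I expect this to require either a boundedness hypothesis on $\prod_{i}\mu(i)$ or the extra structure of Theorem \ref{thm:main} ($f$ nondecreasing, $g/f^2\to 0$); absent such control, the honest conclusion is the asymptotic equivalence $a_{n+1}/r(n)\to 1$, equivalently $\log_{10}a_{n+1}-\log_{10}r(n)\to 0$, which is in any case the form that the subsequent equidistribution-of-logarithms argument actually consumes.
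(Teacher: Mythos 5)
Your estimate for the bracketed error sum is, modulo bookkeeping, the paper's own argument: the paper sets $q(n)=\sum_{k=2}^{n}\prod_{i=k}^{n}|p(i)|$, observes the recursion $q(n+1)=p(n+1)(1+q(n))$, and iterates from an index $N$ beyond which $|p(n)|<\epsilon$ to reach $\limsup_n q(n)\le \epsilon/(1-\epsilon)$; your split of the sum at $k=N$ produces the identical bound, so the two proofs of $E_n\to 0$ are interchangeable. Your preliminary identity $a_{n+1}-r(n)=\lambda(n)a_n$ (from $b_n=r(n)$ together with $a_{n+1}=\lambda(n)a_n+b_n$) does not appear in the paper, but it is correct and a useful sanity check on the indexing.

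Where you part ways with the paper is at the final step, and there your skepticism is warranted: the paper's proof ends with the inequality $|a_{n+1}-r(n)|\le |r(n)|\max\left(1,a_2/b_1\right)q(n)$ and the assertion that $q(n)\to 0$ ``suffices,'' silently dropping the factor $|r(n)|$, which is unbounded in every case of interest (growing $\mu$). Your counterexample $\mu\equiv 2$, $\lambda(n)=1/n$ --- where $p(n)=1/(2n)\to 0$ yet $a_{n+1}-r(n)=\lambda(n)a_n\sim b_1 2^{n-2}/n\to\infty$ --- shows that Lemma \ref{lem:whenrdominates} is false as literally stated; nor is the defect cured by the hypotheses of Theorem \ref{thm:main} (take $\lambda(n)=1/n$, $\mu(n)=3-1/n$, so that $f\equiv 3$ is non-decreasing and $g(n)/f(n)^2\to 0$, while $\lambda(n)a_n$ still diverges). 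What does follow from $p(n)\to 0$ is exactly what you isolate: $a_{n+1}/r(n)=1+E_n\to 1$, equivalently $\log_{10}a_{n+1}-\log_{10}r(n)\to 0$, and since (strong) Benfordness is a statement about $\log_{10}$ modulo $1$, this ratio form feeds the subsequent equidistribution argument just as well --- one restates the lemma multiplicatively and replaces Lemma \ref{lem:benfordAsym} by its logarithmic analogue, after which the equivalence theorem of Section \ref{section:order2} goes through unchanged. In short: your proof of the provable statement coincides with the paper's, and the gap you flag is genuine, but it is a gap in the paper's proof (and in the lemma's formulation), not in yours.
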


\begin{proof}
As previously computed, the closed form of $\{a_n\}$ is given by \eqref{eq:closedanrn}, so
\begin{align}
|a_{n+1} - r(n)| \ &\leq\ |r(n)| \left(\sum_{k=3}^{n} \prod_{i=k}^{n} |p(i)| + \frac{a_2}{b_1} \prod_{i=2}^{n} |p(i)| \right) \nonumber \\
				\ &\leq \ |r(n)| \left( \max \left(1,\frac{a_2}{b_1}\right) \sum_{k=2}^{n} \prod_{i=k}^{n} |p(i)| \right).
\end{align}

Therefore, to show that $r(n)$ is the dominating part of $a_{n+1}$, it suffices to show that
\begin{align} \label{eq:errTermLimit}
	\lim_{n\to\infty} \sum_{k=2}^{n} \prod_{i=k}^{n} |p(i)|\ = \ 0.
\end{align}

Without loss of generality, suppose $p(n)$ is positive for all $n$. Denote $q(n) := \sum_{k=2}^{n} \prod_{i=k}^{n} p(i)$. Then we have that $q(n+1) = p(n+1)(1+q(n))$ and that $q(n)>0$. Fix $\epsilon>0$. There exists $N$ such that for all $n>N$, $|p(n)|<\epsilon$. So
\begin{align}
q(N+1) \ &<\  p(N+1)(1+q(N)) \ <\ \epsilon + \epsilon q(N), \nonumber \\
q(N+2) \ &<\  p(N+2)(1+q(N+1)) \ <\ \epsilon + \epsilon^2 + \epsilon^2 q(N), \nonumber \\
\vdots \nonumber \\
q(N+k) \ &<\  p(N+k)(1+q(N+k-1)) \ <\ \epsilon + \epsilon^2 + \cdots + \epsilon^k + \epsilon^k q(N).
\end{align}
For any given $\epsilon$, $q(N)$ is also fixed. Taking the limit as $k \to \infty$, we get that
\begin{equation}
q(N+k) \ <\ \epsilon + \epsilon^2 + \cdots + \epsilon^k + \epsilon^k q(N) \ \to\ \frac{\epsilon}{1-\epsilon}.
\end{equation}
Then computing the limit as $\epsilon \to 0$, we see $q(N+k)$ converges to 0 as well, which implies \eqref{eq:errTermLimit}. \hfill $\Box$
\end{proof}

The previous lemma gives conditions on the functions $\lambda$ and $\mu$ for the sequence $\{a_n\}$ to be dominated by the main term, $r(n)$. We now want to give equivalent conditions on the functions $f$ and $g$, which appear in the original recurrence relation.

\begin{lem} \label{lem:mainTermCond}
Given functions $f(n)$ and $g(n)$ with $f$ non-decreasing and their resulting auxiliary functions $\lambda(n)$ and $\mu(n)$ as above, then $\lim_{n \to \infty} p(n) = 0$ is equivalent to \be\lim_{n \to \infty} \frac{g(n)}{f(n)^2}\ =\ 0.\ee
\end{lem}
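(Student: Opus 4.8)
The plan is to reduce both conditions to a single exact algebraic identity relating $g(n)/f(n)^2$ to the ratios $p(n-1), p(n)$ and the quotient $f(n-1)/f(n)$, and then read off each implication directly. First I would eliminate $\lambda$ and $\mu$ in favor of $f$ and $p$. Since \eqref{fgfromlambdamu} gives $f(n) = \lambda(n) + \mu(n) = \mu(n)(1 + p(n))$, we have $\mu(n) = f(n)/(1+p(n))$ and $\lambda(n) = f(n)p(n)/(1+p(n))$. Substituting these into $g(n) = -\lambda(n-1)\mu(n)$ yields the exact identity
\be
\frac{g(n)}{f(n)^2} \ = \ -\,\frac{p(n-1)}{\bigl(1+p(n-1)\bigr)\bigl(1+p(n)\bigr)}\cdot\frac{f(n-1)}{f(n)}.
\ee
Everything then follows from analyzing this one expression, together with the observation that $f$ non-decreasing (and eventually positive, as we are in a growth regime) forces $0 < f(n-1)/f(n) \le 1$ for all large $n$.

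The forward implication $p(n) \to 0 \Rightarrow g(n)/f(n)^2 \to 0$ is immediate from the identity: if $p(n) \to 0$ then $p(n-1) \to 0$ and both $1 + p(n-1)$ and $1 + p(n)$ tend to $1$, so the first factor tends to $0$; since the remaining factor $f(n-1)/f(n)$ lies in $(0,1]$, the product tends to $0$. This direction does not even need $f$ monotone, only that $f(n-1)/f(n)$ stays bounded.

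The reverse implication $g(n)/f(n)^2 \to 0 \Rightarrow p(n) \to 0$ is the crux and where I expect the real work. Solving the identity for $1 + p(n)$, or equivalently rewriting the continued-fraction recursion $\lambda(n) = f(n) + g(n)/\lambda(n-1)$ coming from \eqref{fgfromlambdamu} as a recursion for $p(n)$, shows that $p(n)$ is not freely prescribable: it is pinned down by the choice of auxiliary functions fixed in \S\ref{section:solveCoeff}. The argument must therefore (i) use that construction to guarantee we sit on the branch where $\mu$ is the dominant auxiliary function (the branch relevant to Lemma \ref{lem:whenrdominates}), so that $|p(n)|$ cannot run off to $\infty$ and $1 + p(n)$ stays bounded away from $0$, and (ii) run an induction on the recursion showing that, once $g(n)/f(n)^2$ is small, the factor $f(n-1)/f(n) \le 1$ keeps the map contracting toward $p = 0$. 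The main obstacle is controlling this feedback cleanly: because the identity mixes the indices $n-1$ and $n$, a naive estimate bounds $|p(n-1)|$ only by a constant times $|g(n)/f(n)^2|\cdot\bigl(f(n)/f(n-1)\bigr)$, so a priori a rapidly growing $f$ could inflate the quotient $f(n)/f(n-1)$. Making the monotonicity of $f$ do exactly enough work to absorb this quotient and close the induction, and verifying that the constructed branch genuinely keeps $1 + p(n)$ away from zero, is the delicate point on which I would spend most of the effort.
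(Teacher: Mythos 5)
Your forward implication is fine, and it is in substance the paper's own argument: the paper performs the same elimination, writing $g(n)/f(n)^2=-p(n-1)\,\frac{\mu(n-1)}{\mu(n)}\,(1+p(n))^{-2}$ and using monotonicity of $f=\lambda+\mu$ to extract a factor bounded by $1$ (the paper uses that at least one of $\mu(n-1)/\mu(n)$, $\lambda(n-1)/\lambda(n)$ is $\le 1$; you use $f(n-1)/f(n)\le 1$; both need the same implicit positivity). The genuine gap is the reverse implication: what you offer there is not a proof but a plan for an induction, and your proposal ends by conceding that the delicate points --- keeping $1+p(n)$ away from zero and absorbing the quotient $f(n)/f(n-1)$ --- are unresolved. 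A proof attempt that defers ``the crux'' to future effort is missing exactly the half of the lemma that required an idea.

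Moreover, the obstacle you flagged is not a removable technicality; it is fatal, so the induction you outline cannot be closed by any amount of care. Take $f(n)=2^{2^n}$ and $g(n)=-f(n-1)f(n)/4$. Then $\lambda(n)=\mu(n)=f(n)/2$ is an admissible choice of auxiliary functions for \eqref{fgfromlambdamu} (it is the branch $c=\lambda(1)=f(1)/2$ in Lemma \ref{solvinglambdamu}), $f$ is non-decreasing, $g$ never vanishes, and
\be
\frac{g(n)}{f(n)^2} \ = \ -\frac{1}{4}\cdot\frac{f(n-1)}{f(n)} \ = \ -\frac{1}{4}\,2^{-2^{n-1}} \ \longrightarrow \ 0,
\ee
yet $p(n)=1$ for every $n$. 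Choosing a different branch does not help: writing $t(n)=\lambda(n)/f(n)$, the recursion $\lambda(n)=f(n)+g(n)/\lambda(n-1)$ becomes $t(n)=1-1/\bigl(4t(n-1)\bigr)$, whose admissible iterates converge to the parabolic fixed point $t=1/2$, so $p(n)\to 1$ for every choice of $\lambda(1)$. Hence monotonicity of $f$ alone cannot yield the reverse direction; one needs in addition something like $\sup_n f(n)/f(n-1)<\infty$, i.e., your factor $f(n-1)/f(n)$ bounded away from $0$. You should also note that the paper's own proof of this direction suffers from the same defect: it concludes ``by the same reasoning'' that $\lim p(n)/(1+p(n))^2=0$, but a factor known only to be $\le 1$ bounds a product from above by the other factor, which is the wrong direction for inverting the implication; the example above shows the needed lower bound is simply not available. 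So your instinct about where the difficulty lies was exactly right, and the honest conclusion is that the reverse implication --- and with it the lemma as stated --- requires an extra hypothesis, not a cleverer induction.
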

\begin{proof}

Because $f$ is non-decreasing, we have that for all $n$,
\begin{equation}\label{fincreasing}
f(n) \ =\ \lambda(n)+\mu(n) \geq \ \lambda(n-1)+\mu(n-1) \ =\ f(n-1).
\end{equation}
So for each $n$, either $\lambda(n) \geq \lambda(n-1)$ or $\mu(n) \geq \mu(n-1)$ (or both).

First assume that $\lim_{n \to \infty} p(n) = 0$. Then since
\begin{align}
f(n) \ &=\ \lambda(n)+\mu(n) \nonumber \\
&= \ (1+p(n))\mu(n), \nonumber \\
g(n) \ &=\ -\lambda(n-1)\mu(n) \nonumber \\
&= \ -p(n-1)\mu(n-1)\mu(n), 
\end{align}
we have
\begin{align}
\lim_{n \to \infty} \frac{g(n)}{f(n)^2}
\ &= \ \lim_{n \to \infty} \frac{-p(n-1)\mu(n-1)\mu(n)}{(1+p(n))^2\mu(n)^2} \nonumber\\
\ &= \ -\lim_{n \to \infty} p(n-1)\frac{\mu(n-1)}{\mu(n)} \nonumber \\
\ &= \ -\lim_{n \to \infty} p(n) \frac{\lambda(n-1)}{\lambda(n)},
\end{align}
and since at least one of $\mu(n-1)/\mu(n)$, $\lambda(n-1)/\lambda(n)$ is less than or equal to 1 for each $n$, the limit goes to zero.

On the other hand, suppose $\lim_{n \to \infty} g(n)/f(n)^2 = 0$. Under this assumption, it must be that $f(n)=\lambda(n)+\mu(n)$ is eventually nonzero. Then
\begin{align}
0 \ &=\ \lim_{n \to \infty} \frac{g(n)}{f(n)^2} \ = \ \lim_{n \to \infty} \frac{-\lambda(n-1)\mu(n)}{(\lambda(n)+\mu(n))^2} \nonumber \\
\ &= \ -\lim_{n \to \infty} \frac{p(n)}{(p(n)+1)^2}\cdot \frac{\lambda(n-1)}{\lambda(n)} \ =\ -\lim_{n \to \infty}\frac{p(n-1)}{(p(n)+1)^2}\cdot \frac{\mu(n-1)}{\mu(n)},
\end{align}
which, by the same reasoning, shows that $\lim_{n \to \infty} p(n)/(p(n)+1)^2=0$ and hence that
$\lim_{n \to \infty} p(n) = 0$ as desired. \hfill $\Box$
\end{proof}

\begin{thm}
Suppose functions $f(n)$ and $g(n)$ with $f$ non-decreasing satisfy \be\lim_{n \to \infty} g(n)/f(n)^2 = 0.\ee Then $\{a_n\}$ is Benford if and only if $r(n)$ is Benford.
\end{thm}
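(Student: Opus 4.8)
The plan is to deduce this theorem almost immediately by chaining together the two lemmas just established with the asymptotic stability result, Lemma \ref{lem:benfordAsym}, from the introduction; no new hard analysis should be needed. First I would invoke Lemma \ref{lem:mainTermCond}: since $f$ is non-decreasing and $\lim_{n\to\infty} g(n)/f(n)^2 = 0$ by hypothesis, the lemma converts this into the statement $\lim_{n\to\infty} p(n) = 0$, where $p(n) = \lambda(n)/\mu(n)$ is the ratio of the auxiliary functions. This is exactly the hypothesis required by Lemma \ref{lem:whenrdominates}.

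Next, Lemma \ref{lem:whenrdominates} gives $\lim_{n\to\infty}(a_{n+1} - r(n)) = 0$, so the main term $r(n)$ captures all of $a_{n+1}$ up to an error tending to zero. Setting the two sequences $\alpha_n := a_{n+1}$ and $\rho_n := r(n)$, we have $\lim_{n\to\infty}(\alpha_n - \rho_n) = 0$. Because we follow the convention that ``Benford'' means strongly Benford, Lemma \ref{lem:benfordAsym} applies and, being symmetric in its two sequences, may be run in both directions: if $\{\rho_n\}$ is Benford then so is $\{\alpha_n\}$, and conversely. Finally, Benfordness is an asymptotic-density (equidistribution modulo $1$) property and is therefore unchanged by the harmless index shift relating $\{a_{n+1}\}$ to $\{a_n\}$, and likewise for $\{r(n)\}$. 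Combining these observations yields the claimed equivalence: $\{a_n\}$ is Benford if and only if $r(n)$ is Benford.

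The step most deserving of care is the application of Lemma \ref{lem:benfordAsym}. Its one-directional statement must be used twice, once with each sequence playing the role of the ``known Benford'' sequence, and this is legitimate only under the strong-Benford convention; the remark after that lemma warns that the analogue fails for ordinary first-digit Benfordness, where an arbitrarily small perturbation can flip a leading digit. I would also confirm that $r(n)$ is an admissible (nonzero) sequence: since $g$ never vanishes we have $\mu(i) \neq 0$ for all $i$, and the auxiliary functions are chosen via Lemma \ref{bnonzero} so that $b_1 \neq 0$, whence $r(n) = b_1 \prod_{i=2}^{n}\mu(i)$ is never zero and its logarithm is well defined. With these points checked, the theorem follows with no further computation.
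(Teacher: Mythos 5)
Your proposal is correct and follows essentially the same route as the paper's own proof: Lemma \ref{lem:mainTermCond} converts the hypotheses into $\lim_{n\to\infty}p(n)=0$, Lemma \ref{lem:whenrdominates} then gives $\lim_{n\to\infty}(a_{n+1}-r(n))=0$, and Lemma \ref{lem:benfordAsym}, applied in both directions together with the harmless index shift between $\{a_{n+1}\}$ and $\{a_n\}$, yields the equivalence. Your additional checks (the strong-Benford convention behind Lemma \ref{lem:benfordAsym} and the non-vanishing of $r(n)$ via Lemma \ref{bnonzero}) are sound points of care that the paper leaves implicit, but they do not change the argument.
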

\begin{proof}
Since $\lim_{n \to \infty} g(n)/f(n)^2 = 0$ and $f$ is non-decreasing, by Lemma \ref{lem:mainTermCond} we have that $\lim_{n\to \infty}p(n) = 0$. Then by Lemma \ref{lem:whenrdominates}, $\lim_{n\to \infty}(a_{n+1}-r(n)) = 0$. Hence by Lemma \ref{lem:benfordAsym}, if $r(n)$ is Benford then $a_{n+1}$ is Benford, which is equivalent to $a_n$ being Benford; and similarly if $a_n$ is Benford, then $r(n)$ is Benford. \hfill $\Box$
\end{proof}

\subsection{Constructing and Parametrizing the Coefficient Functions}\label{section:solveCoeff}


In this section, we provide a construction for the desired auxiliary functions $\lambda$ and $\mu$, and show that this can be done in a way that avoids vanishing denominators in the computations of the previous section.

\begin{lem} \label{solvinglambdamu}
Given functions $f,g:\N_{\ge 2} \to \R$, there exist\footnote{These functions are not uniquely determined by $f,g$; however, they are completely determined by a choice of $\lambda(1)$.} functions $\lambda:\N_{\ge 1} \to \R$ and $\mu : \N_{\ge 2} \to \R$ such that for all $n\ge 2$,
\begin{align} \label{fgfromlambdamueqs}
	f(n)\ &=\ \lambda(n)+\mu(n),\nonumber \\
    g(n)\ &=\ -\lambda(n-1)\mu(n).
\end{align}
\end{lem}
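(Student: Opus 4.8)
The plan is to recast the two simultaneous equations \eqref{fgfromlambdamueqs} as a single first-order recursion that is completely pinned down by a choice of $\lambda(1)$, exactly as the footnote anticipates. Solving the second equation for $\mu(n)$ gives $\mu(n) = -g(n)/\lambda(n-1)$, and substituting this into the first equation $f(n) = \lambda(n)+\mu(n)$ produces
\begin{equation}
\lambda(n) \ = \ f(n) + \frac{g(n)}{\lambda(n-1)}, \qquad n \ge 2.
\end{equation}
Thus, once $\lambda(1)$ is fixed, every $\lambda(n)$, and hence every $\mu(n) = f(n)-\lambda(n)$, is determined. The only thing that can go wrong is a vanishing denominator, i.e. $\lambda(n-1)=0$ at some stage, so the entire content of the lemma reduces to producing one value of $\lambda(1)$ for which $\lambda(n)\neq 0$ for all $n$.

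The key step is to control where the $\lambda(n)$ can vanish as functions of the free parameter $x := \lambda(1)$. Each step of the recursion is the Möbius transformation $T_n(x) = f(n) + g(n)/x$, represented by the matrix $M_n = \begin{pmatrix} f(n) & g(n) \\ 1 & 0 \end{pmatrix}$, whose determinant $-g(n)$ is nonzero because $g$ is never zero (the standing assumption from the set-up following \eqref{eq:recurWithfg}). Hence each $T_n$ is an invertible Möbius map of $\R \cup \{\infty\}$, and $\lambda(n) = (T_n \circ \cdots \circ T_2)(x)$ is again a Möbius function of $x$, with matrix $M_n \cdots M_2$ of nonzero determinant $\prod_{i=2}^{n}(-g(i))$. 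Writing $\lambda(n)(x) = (A_n x + B_n)/(C_n x + D_n)$ with $A_n D_n - B_n C_n \neq 0$, we see that for each fixed $n$ there is at most one real $x$ with $\lambda(n)(x)=0$ (the solution of $A_n x + B_n = 0$) and at most one real $x$ at which the map hits $\infty$ (where $C_n x + D_n = 0$), the latter being precisely where some earlier $\lambda(k)$ vanished.

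Therefore the set of bad initial values—those $x \in \R$ for which some $\lambda(n)$ vanishes or the recursion becomes undefined—is a countable union of singletons, hence countable. Since $\R$ is uncountable, I would pick $\lambda(1) \in \R$ outside this set; for such a choice every $\lambda(n)$ is finite and nonzero, so the recursion is well-defined throughout. Finally I would verify that $\lambda$ together with $\mu(n) := f(n)-\lambda(n)$ satisfies \eqref{fgfromlambdamueqs}: the first equation holds by definition, and since the recursion gives $\mu(n) = f(n)-\lambda(n) = -g(n)/\lambda(n-1)$, multiplying through yields $-\lambda(n-1)\mu(n) = g(n)$, which is the second equation.

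The main obstacle is exactly the nonvanishing of the denominators $\lambda(n-1)$; a priori one worries the recursion might be forced through $0$ regardless of the starting value. The Möbius structure is what resolves this, since it shows the zero-set of each $\lambda(n)(x)$ is a single point rather than an interval, making the total set of obstructions provably countable and hence avoidable.
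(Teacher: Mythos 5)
Your proof is correct, and it arrives at the same construction as the paper, though by a different and somewhat more conceptual route. The paper never derives the first-order recursion $\lambda(n) = f(n) + g(n)/\lambda(n-1)$; instead it posits an explicit formula: it introduces two basis solutions $\{\alpha_n\}$, $\{\beta_n\}$ of the original recurrence \eqref{eq:recurWithfg} with initial data $(\alpha_1,\alpha_2)=(0,1)$ and $(\beta_1,\beta_2)=(1,0)$, defines $\lambda_c(n) = (\alpha_n c + \beta_n)/(\alpha_{n-1}c + \beta_{n-1})$ and $\mu_c(n) = f(n)-\lambda_c(n)$, checks the second equation of \eqref{fgfromlambdamueqs} by induction, and takes $c=\lambda(1)$ outside the countable set $\{-\beta_k/\alpha_k : \alpha_k \neq 0\}$ so that no denominator vanishes. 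The two arguments are dual presentations of one fact: the entries of your matrix product $M_n\cdots M_2$ are precisely solutions of the underlying linear recurrence, so your fractional-linear function $(A_n x+B_n)/(C_n x+D_n)$ \emph{is} the paper's $\lambda_c(n)$, and your countable bad set (the zeros and poles over all $n$) is exactly the paper's excluded set of parameters. What your route buys: the fractional-linear dependence on $\lambda(1)$ and the nonvanishing of $A_nD_n-B_nC_n = \prod_{i=2}^{n}(-g(i))$ come for free from multiplicativity of determinants, rather than from an ansatz verified by an induction the paper leaves to the reader, and it makes transparent where the standing hypothesis that $g$ never vanishes (stated after \eqref{eq:recurWithfg}, and needed by both proofs) enters. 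Deriving the recursion first, as you do, also guards against indexing slips: the paper's displayed formula in fact satisfies $\lambda_c(n)=f(n-1)+g(n-1)/\lambda_c(n-1)$ rather than the required relation, i.e., it is off by one and should read $\lambda_c(n)=(\alpha_{n+1}c+\beta_{n+1})/(\alpha_n c+\beta_n)$, a correction under which the rest of the paper's argument goes through verbatim. What the paper's route buys: the closed form ties $\lambda$ directly to solutions of the recurrence being studied, which is the formulation that generalizes cleanly to the depth-three case in Lemma \ref{lambdathree}.
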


\begin{proof}
Let $\{\alpha_n\}_{n=1}^\infty,\{\beta_n\}_{n=1}^\infty$ be the sequences both satisfying the same recurrence relation
\begin{align}
\alpha_{n+1} \ &=\ f(n)\alpha_n + g(n)\alpha_{n-1}, \nonumber \\
\beta_{n+1} \ &=\ f(n)\beta_n + g(n)\beta_{n-1}
\end{align}
with initial terms
\begin{align}
\alpha_1 \ = \ 0,& \; \alpha_2 \ = \ 1, \nonumber\\
\beta_1 \ = \ 1,& \; \beta_2 \ = \ 0.
\end{align}

Choose $c \in \R \setminus\{-\beta_k/\alpha_k : k\in \N , \alpha_k\ne 0\}$ to be an arbitrary constant.
Now we define
\begin{align}
\lambda_c(n) \ &=\ \frac{\alpha_nc + \beta_n}{\alpha_{n-1}c + \beta_{n-1}}, \nonumber\\
\mu_c(n) \ &=\ f(n) - \lambda_c(n).
\end{align}

For any choice of $c$ as above, setting $\lambda = \lambda_c$ and $\mu = \mu_c$ satisfies \eqref{fgfromlambdamueqs}, by the following.
The equation \begin{equation}
f(n) \ = \ \lambda_c(n) + \mu_c(n)
\end{equation} follows from the way we've specified $\mu_c$.
Now, let $\lambda_c(1) = c$. Then
\begin{equation}
g(n) \ = \ -\lambda_c(n-1)\mu_c(n)
\end{equation} follows by induction on $n$; since we chose the constant $c \notin \{-\beta_k/\alpha_k : k\in \N, \alpha_k\ne 0\}$, the denominator of our expression for $\lambda_c$ never vanishes. \hfill $\Box$
\end{proof}

\begin{lem} \label{bnonzero}
Given a choice of initial terms $a_1,a_2$ for the recurrent sequence, we can find $\lambda,\mu$ as above so that the sequence $\{b_n\}_{n=1}^\infty$ defined by \begin{equation}b_n \ = \ a_{n+1} - \lambda (n) a_n \end{equation}
does not vanish.
\end{lem}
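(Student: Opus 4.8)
The plan is to reduce the non-vanishing of the entire sequence $\{b_n\}$ to the single condition $b_1 \neq 0$, and then to arrange $b_1 \neq 0$ through the remaining freedom in the choice of the parameter $c = \lambda(1)$ from Lemma \ref{solvinglambdamu}. First I would recall from \eqref{eq:closedbn} that
\begin{equation}
b_n \ = \ \left(\prod_{i=2}^{n} \mu(i)\right) b_1.
\end{equation}
Since $g$ is never zero and $g(n) = -\lambda(n-1)\mu(n)$, each factor $\mu(i)$ is nonzero, so the product $\prod_{i=2}^{n}\mu(i)$ never vanishes. Hence $b_n = 0$ for some (equivalently, every) $n$ if and only if $b_1 = 0$, and so $\{b_n\}$ is non-vanishing precisely when $b_1 \neq 0$.

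Next I would express $b_1$ in terms of the parameter. Using $b_1 = a_2 - \lambda(1)a_1 = a_2 - c\,a_1$, where $c = \lambda_c(1)$ is the free constant from Lemma \ref{solvinglambdamu}, the condition $b_1 \neq 0$ becomes a single linear constraint on $c$. If $a_1 = 0$, then $b_1 = a_2$, which is nonzero because the initial values are taken nonzero, and any admissible $c$ then works. If $a_1 \neq 0$, then $b_1 = 0$ exactly when $c = a_2/a_1$, so it suffices to additionally require
\begin{equation}
c \ \neq \ \frac{a_2}{a_1}
\end{equation}
on top of the restriction $c \in \R \setminus \{-\beta_k/\alpha_k : k \in \N,\ \alpha_k \neq 0\}$ already imposed in Lemma \ref{solvinglambdamu}.

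Finally, I would check compatibility of the two constraints: we are excluding from $\R$ a countable set together with at most one further point, so the set of admissible $c$ remains nonempty (indeed uncountable). Choosing any such $c$ and setting $\lambda = \lambda_c$, $\mu = \mu_c$ as in Lemma \ref{solvinglambdamu} yields auxiliary functions for which $b_1 \neq 0$, whence $\{b_n\}$ does not vanish. The only real content is the observation that $\mu$ is non-vanishing, which forces the all-or-nothing behavior of $\{b_n\}$; the main (and quite mild) obstacle is simply verifying that the extra requirement $b_1 \neq 0$ does not clash with the constraints of Lemma \ref{solvinglambdamu}, and this is immediate since both amount to excluding a countable subset of $\R$.
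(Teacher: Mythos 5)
Your proposal is correct and follows essentially the same route as the paper's proof: reduce non-vanishing of $\{b_n\}$ to $b_1 \neq 0$ using the non-vanishing of $\mu$ (which comes from $g(n) = -\lambda(n-1)\mu(n) \neq 0$), then exploit the remaining freedom in the parameter $c = \lambda(1)$ from Lemma \ref{solvinglambdamu} to avoid the single additional value $a_2/a_1$. Your treatment of the degenerate case $a_1 = 0$ is a small extra refinement the paper glosses over, but it does not change the argument's structure.
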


\begin{proof}
The sequence $\{b_n\}$ satisfies the recurrence
\begin{equation}b_n \ = \ \mu(n)b_{n-1} \end{equation}
for $n\ge 2$; moreover, $\mu$ is non-vanishing, since we required $g$ to be non-vanishing and \begin{equation} g(n) \ =\ -\lambda(n-1)\mu(n). \end{equation} Hence as long as $b_1 \neq 0$, the entire sequence $\{b_n\}$ is never zero.

Since \begin{equation}b_1 \ = \ a_2 - \lambda(1)a_1, \end{equation} we only need $\lambda(1) \neq a_2/a_1$. Since the constant $c=\lambda (1)$ was arbitrarily chosen in Lemma \ref{solvinglambdamu} from the real numbers minus a countable set, we can still choose $c$ to avoid one more number, so that $c \neq a_2 / a_1$. \hfill $\Box$
\end{proof}

\section{Examples of Benford Behavior in Non-constant Recurrences} \label{section:examples}

We saw in previous sections that as long as the coefficient functions $f$ and $g$ satisfy certain conditions, the error term vanishes in the limit, and hence by Lemma \ref{lem:benfordAsym} the main term $r(n) = b_1 \prod_{i=2}^{n} \mu(i)$ dominates. Then by Lemma \ref{lem:benfordEquidistr}, Benfordness is equivalent to an equidistribution problem. Since Benfordness is preserved under translation and dilation, it suffices to study $\prod_{i=1}^{n} \mu(i)$. For simplicity, we redefine
\begin{equation}
r(n) \ =\ \prod_{i=1}^{n} \mu(i).
\end{equation}
In this section, we give several examples of $\mu(n)$ that make $\{a_n\}$ a Benford sequence.

\begin{exa} \label{exa:factorialPower}
If $\mu(k) = ak$ where $a \in \R$, then $r(n) = a^n n!$. In this case, $r(n)$ follows Benford's Law. In the special case where $a=1$, we get the factorial function.
\end{exa}

\begin{proof}
Theorem 3 of \cite{Di} shows that the factorial function is Benford; the proof uses the lemma that $f(n)=b(n+\frac{1}{2})\log{n}+cn$ is equidistributed mod 1 ($b,c$ are constants). Taking $b=1$ and $c=\log{a}$ gives that $\{(n+\frac{1}{2})\log{n}+n\log a\}$ is equidistributed mod 1. Since equidistribution is preserved under translation, it follows from Stirling's formula and Lemma \ref{lem:benfordAsym} that
\begin{equation}
\log r(n) \ =\ \left(n+\frac{1}{2}\right)\log{n}+n\log a+\frac{1}{2}\log{2\pi}
\end{equation}
is equidsitributed mod 1. Consequently, $r(n)$ is a Benford sequence. \hfill $\Box$
\end{proof}



\begin{exa}
If $\mu(k) = k^\alpha$ where $\alpha \in \R$, then $r(n) = (n!)^\alpha$ follows Benford's Law.
\end{exa}

\begin{proof}
It suffices to show $\log r(n) = \alpha \log(n!)$ is equidistributed mod 1. By Example \ref{exa:factorialPower}, we know $\{\log(n!)\}$ is equidistributed mod 1. The result follows immediately since multiplying by a constant preserves equidistribution. \hfill $\Box$
\end{proof}

\begin{lem}[Weyl Equidistribution Theorem for Polynomials] \label{lem:edPoly}
Let $s \geq 1$ be an integer, and let $P(n) = \alpha_s n^s + \cdots + \alpha_0$ be a polynomial of degree $s$ with $\alpha_0,\ldots,\alpha_s \in \R$. If $\alpha_s$ is irrational, then $n \mapsto P(n)$ is asymptotically equidistributed on $\Z$.
\end{lem}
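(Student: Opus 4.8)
The plan is to establish this by induction on the degree $s$, using Weyl's criterion together with the differencing technique of van der Corput. Recall Weyl's criterion (the engine behind the appendix proof of Theorem \ref{thm:weylequidistributionkis1}): a real sequence $\{x_n\}$ is equidistributed modulo $1$ if and only if $\frac{1}{N}\sum_{n=1}^{N} e^{2\pi i h x_n} \to 0$ as $N\to\infty$ for every nonzero integer $h$. Applied to $x_n = P(n)$, and observing that $hP(n)$ is again a degree-$s$ polynomial whose leading coefficient $h\alpha_s$ is irrational, it suffices to prove that $\frac{1}{N}\sum_{n=1}^N e^{2\pi i P(n)} \to 0$ for every polynomial $P$ of degree $s$ with irrational leading coefficient. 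This reduction lets me absorb the frequency $h$ and analyze a single exponential sum.

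The base case $s=1$ is precisely Theorem \ref{thm:weylequidistributionkis1}, since for linear $P(n)=\alpha_1 n + \alpha_0$ the averaged geometric series vanishes whenever $\alpha_1$ is irrational. For the inductive step (so $s\ge 2$), assume the claim for degree $s-1$ and invoke the van der Corput inequality: for complex numbers $z_1,\dots,z_N$ and any integer $1\le H\le N$,
\be
\left|\sum_{n=1}^N z_n\right|^2 \ \le\ \frac{N+H}{H}\sum_{|h|<H}\left(1-\frac{|h|}{H}\right)\sum_{1\le n,\,n+h\le N} z_{n+h}\overline{z_n}.
\ee
Taking $z_n = e^{2\pi i P(n)}$, so that $|z_n|=1$ and $z_{n+h}\overline{z_n} = e^{2\pi i (P(n+h)-P(n))}$, the key structural fact is that $P(n+h)-P(n)$ is a polynomial in $n$ of degree $s-1$ whose leading coefficient equals $s\,h\,\alpha_s$, which is irrational for each fixed $h\ne 0$. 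Hence by the inductive hypothesis each average $\frac{1}{N}\sum_n e^{2\pi i (P(n+h)-P(n))}$ tends to $0$.

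Dividing the inequality by $N^2$ and sending $N\to\infty$ with $H$ fixed, the diagonal term $h=0$ contributes $\frac{1}{H}$ in the limit while every off-diagonal difference average vanishes, yielding $\limsup_{N\to\infty}\big|\frac1N\sum_{n=1}^N e^{2\pi i P(n)}\big|^2 \le \frac{1}{H}$. Since $H$ was arbitrary, letting $H\to\infty$ forces this $\limsup$ to be $0$, completing the induction and hence the proof.

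I expect the main obstacle to be the correct deployment of the van der Corput inequality rather than any single computation: one must state it in a normalized form, track that the prefactor $(N+H)/(HN)$ tends to $1/H$, and---crucially---take the two limits in the right order, letting $N\to\infty$ before $H\to\infty$, since swapping them yields no bound. The conceptual heart is the differencing step, which trades one degree of $P$ for a weighted average over shifts; the entire argument succeeds only because differencing simultaneously lowers the degree and preserves irrationality of the rescaled leading coefficient $s\,h\,\alpha_s$. Verifying this last point---that $h\ne 0$ and $s\ge 1$ keep $s\,h\,\alpha_s$ irrational---is exactly what licenses the inductive hypothesis at each stage.
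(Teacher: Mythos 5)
Your proof is correct: the reduction via Weyl's criterion, the induction on degree with the van der Corput inequality, the observation that $P(n+h)-P(n)$ has degree $s-1$ with irrational leading coefficient $sh\alpha_s$, and the order of limits ($N\to\infty$ before $H\to\infty$) are all sound. Note, however, that the paper does not prove this lemma at all --- it simply cites Tao's notes \cite{Tao} --- so your argument supplies what the paper omits, and it is in fact the same van der Corput differencing induction used in that cited reference.
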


See \cite{Tao} for a proof of the above lemma, which we use to construct the following example.

\begin{exa}
If $\mu(k) = \exp(\alpha h(k))$ where $\alpha$ is irrational and $h(k)$ is a monic polynomial, then $r(n)$ follows Benford's law.
\end{exa}

\begin{proof}
Note that
$\log r(n) = \alpha \sum_{k=1}^{n} h(k)$. Since $h(k)$ is a monic polynomial, the sum $P(n) = \sum_{k=1}^{n} h(k)$ is a polynomial with rational leading coefficient. By Lemma \ref{lem:edPoly}, $\log r(n) = \alpha P(n)$ has irrational leading coefficient, so it is asymptotically equidistributed mod 1. Thus $r(n)$ is Benford. \hfill $\Box$
\end{proof}

In the above examples, $f(n)$ and $g(n)$ are all deterministic functions. However, we may not be able to find the exact forms of $f$ and $g$ in many cases, or as we saw in the introduction we may wish them to be random variables to model non-deterministic real world processes. Here we extend our results and give an example where $\mu$ is a random variable.

\begin{exa} \label{exa:rv}
Suppose $\mu(n) \sim h(n) U_n$, where $U_n$ are independent random variables uniformly distributed on $[0, 1]$ and $h(n)$ is a deterministic function in $n$ such that $\prod_{i=1}^{n} h(i)$ is Benford. Then $r(n) \sim \prod_{i=1}^{n} h(i) \prod_{i=1}^{n} U_i$, and $r(n)$ follows Benford's Law.
\end{exa}

\begin{rek}
As in Lemma 1.2 of \cite{JKKKM}, one can show that chains of uniform distributions converge to Benford's Law rapidly using the Mellin transform. A chain of uniform distributions multiplied together, as in $\prod_{i=1}^{n} U_i$, follows Benford's Law. The product of $h(n)$ will be Benford if $h$ is $\mu$ from any one of the above examples. Upon taking logarithms, both $\log (\prod_{i=1}^{n} U_i)$ and $\log (\prod_{i=1}^{n} h(i))$ will be equidistributed mod 1. Since the sum mod 1 of two independent equidistributed sequences is again equidistributed mod 1, in these cases $r(n)$ will be a Benford sequence.
\end{rek}

\section{Generalization to Higher Depth Recurrence Relations} \label{section:higherDegree}

In this section, we generalize some of the results to linear recurrence relations of larger depth. A general recurrence relation of depth $L$ is of the form
\begin{equation} \label{eq:highRecur}
a_{n+1} = f_1(n)a_n + f_2(n)a_{n-1} + \cdots + f_{L}(n)a_{n-L+1}.
\end{equation}
As before, our goal is to find a closed form expression for the recurrent sequence, then to isolate a main term with simpler form which dominates. Our technique is again to reduce the degree of the recurrence by introducing auxiliary functions.

Here we demonstrate the process for computing an (asymptotic) closed form of a sequence satisfying a recurrence relation of degree 3. For recurrent sequences of higher degree, we can repeat similar processes until we reduce to the previously studied case of a recurrent sequence of degree 2.

Suppose the sequence $\{a_n\}_{n=1}^{\infty}$ is defined by the recurrence
\begin{align} \label{eq:highRecurAn}
a_{n+1} \ =\ f_1(n)a_n + f_2(n)a_{n-1} + f_3(n)a_{n-2}
\end{align}
for $n\ge 3$, and has initial values $a_1,a_2,a_3$. If we consider a linear combination of the adjacent terms of this sequence, the resulting sequence should satisfy a recurrence relation of degree 2. This suggests that we should define an auxiliary sequence $\{b_n\}_{n=1}^{\infty}$ by
\begin{align} \label{eq:highAnBn}
b_n \ =\ a_{n+1} - \lambda(n)a_n,
\end{align}
(with the function $\lambda(n)$ still to be determined) and posit the existence of functions $g_1,g_2$ so that it satisfies the recurrence relation
\begin{align} \label{eq:highRecurBn}
b_n \ =\ g_1(n-1)b_{n-1} + g_2(n-1)b_{n-2}.
\end{align}
We next substitute \eqref{eq:highAnBn} into \eqref{eq:highRecurBn} and compare coefficients with \eqref{eq:highRecurAn} to determine functions $\lambda$, $g_1$ and $g_2$:
\begin{multline}
a_{n+1} \ =\ (\lambda(n)+g_1(n-1))a_n
+ (-g_1(n-1)\lambda(n-1) \\
+ g_2(n-1))a_{n-1}
+ (-g_2(n-1)\lambda(n-2))a_{n-2}.
\end{multline}

We see that the previous relations hold if we can ensure that the following hold:
\begin{align}
f_1(n) \ &=\ \lambda(n) + g_1(n-1), \nonumber\\
f_2(n) \ &=\ -g_1(n-1)\lambda(n-1)+g_2(n-1), \nonumber\\
f_3(n) \ &=\ -g_2(n-1)\lambda(n-2). \label{eq:highFctnRelations}
\end{align}

We will show in Lemma \ref{lambdathree} that we can always find $\lambda,g_1,g_2$ which satisfy these relations.

Consider the recurrence relation \eqref{eq:highRecurBn} satisfied by $b_n$. By the results in previous sections, we can find auxiliary functions $\mu_1(n)$ and $\mu_2(n)$ so that
\begin{align}
g_1(n) \ &=\ \mu_1(n) + \mu_2(n), \nonumber \\
g_2(n) \ &=\ -\mu_1(n-1)\mu_2(n).
\end{align}

By Lemma~\ref{lem:mainTermCond} in Section~\ref{section:order2}, if $\mu_1(n)/\mu_2(n) \to 0$ as $n \to \infty$, then $\{b_n\}$ is dominated by the product $\prod_{i=1}^{n} \mu_2(i)$. By \eqref{eq:highAnBn}, we can solve for $a_n$:
\begin{equation} \label{eq:closedhighan}
a_{n+1} \ = \ b_1 \left( \sum_{k=2}^{n} \prod_{i=k+1}^{n}\lambda(i) \prod_{j=2}^{k-1}\mu_2(j) \right) + a_2\prod_{i=2}^{n}\lambda(i).
\end{equation}
As before, if $\lambda(n)/\mu_2(n) \to 0$, then $a_{n+1} - \prod_{i=1}^{n} \mu_2(i)\to 0$ as $n\to \infty$.

The relationship between the functions given in the recurrence relations and the auxiliary functions are given in \eqref{eq:highFctnRelations}. Thus, under the conditions that $\mu_1(n)/\mu_2(n) \to 0$ and $\lambda(n)/\mu_2(n) \to 0$, we have
\begin{align}
f_1(n) \ &=\ \mu_2(n) \left( 1+\frac{\lambda(n)}{\mu_2(n)} \right) \ \sim\  \mu_2(n), \nonumber \\
f_2(n) \ &=\ \mu_2(n)^2 \left( -\frac{\lambda(n)}{\mu_2(n)} - \frac{\mu_1(n)}{\mu_2(n)} \right), \nonumber \\
f_3(n) \ &=\ \mu_2(n)^3 \frac{\lambda(n)}{\mu_2(n)} \frac{\mu_1(n)}{\mu_2(n)},
\end{align}
and thus
\begin{align} \label{eq:highCond}
\frac{f_2(n)}{f_1(n)^2}\ \to\ 0 \quad \ \ \text{and}\ \ \quad \frac{f_3(n)}{f_1(n)^3}\ \to\ 0.
\end{align}

Conversely, we can show that, for suitable functions $f_1, f_2, f_3$, if \eqref{eq:highCond} holds, then $\{a_n\}$ is dominated by a multiplicative term. By Lemma \ref{lem:benfordAsym}, it suffices to consider Benfordness of the main term of $\{a_n\}$. Examples are given in Section \ref{section:examples}.

The above is the case of degree 3. For even higher degree recurrences, similar results will hold, as long as a main term can be isolated; we can repeatedly introduce auxiliary sequences satisfying recurrence relations of lower degree, eventually reaching the degree 2 case we have studied. Along the way, we will need to impose conditions on the coefficient functions so that we can ignore lower degree terms.

\begin{lem}\label{lambdathree}
Given functions $f_1,f_2,f_3$, there exist\footnote{As in Lemma \ref{solvinglambdamu}, these functions are not unique, but are completely determined by $\lambda(1), \lambda(2)$.} functions $\lambda,g_1,g_2$ such that equations
\begin{align}
f_1(n) \ &=\ \lambda(n) + g_1(n-1), \nonumber \\
f_2(n) \ &=\ -g_1(n-1)\lambda(n-1)+g_2(n-1), \nonumber \\
f_3(n) \ &=\ -g_2(n-1)\lambda(n-2) \label{eq:solveforlambdaf1f2f3}
\end{align}
hold.
\end{lem}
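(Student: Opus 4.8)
The plan is to mimic the construction in Lemma \ref{solvinglambdamu}, reducing the problem of finding $\lambda,g_1,g_2$ to a recursive determination of $\lambda$ alone, since $g_1$ and $g_2$ are forced by the first and third equations of \eqref{eq:solveforlambdaf1f2f3} once $\lambda$ is known. Specifically, from $f_1(n) = \lambda(n) + g_1(n-1)$ we read off $g_1(n-1) = f_1(n) - \lambda(n)$, and from $f_3(n) = -g_2(n-1)\lambda(n-2)$ we get $g_2(n-1) = -f_3(n)/\lambda(n-2)$ provided $\lambda(n-2) \neq 0$. Substituting both of these into the middle equation $f_2(n) = -g_1(n-1)\lambda(n-1) + g_2(n-1)$ yields a single scalar equation relating $\lambda(n)$, $\lambda(n-1)$, and $\lambda(n-2)$, which we can solve for $\lambda(n)$ in terms of the two preceding values. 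Thus once we fix the two initial values $\lambda(1)$ and $\lambda(2)$ (matching the footnote's claim), the entire function $\lambda$ is determined by a first-order-in-the-unknown recursion, and $g_1,g_2$ follow.

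First I would carry out this algebraic elimination explicitly: solving the substituted middle equation for $\lambda(n)$ gives
\begin{equation}
\lambda(n)\ =\ f_1(n) - \frac{1}{\lambda(n-1)}\left(g_2(n-1) - f_2(n)\right)\ =\ f_1(n) + \frac{1}{\lambda(n-1)}\left(f_2(n) + \frac{f_3(n)}{\lambda(n-2)}\right),
\end{equation}
so $\lambda(n)$ is a well-defined real number as long as neither $\lambda(n-1)$ nor $\lambda(n-2)$ vanishes. This is the analogue of the closed-form $\lambda_c(n)$ from Lemma \ref{solvinglambdamu}; here, rather than writing $\lambda$ in terms of two auxiliary solution sequences $\alpha_n,\beta_n$, the recursion is given directly. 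I would then verify by construction that the $g_1,g_2$ defined above satisfy all three equations of \eqref{eq:solveforlambdaf1f2f3}: the first and third hold by definition, and the middle one holds precisely because $\lambda(n)$ was chosen to solve it.

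The main obstacle is ensuring that the recursion does not break down, i.e.\ that $\lambda(n)$ never vanishes, since a zero value would make the next division by $\lambda$ undefined (and would also make $g_2$ ill-defined). I would handle this exactly as the earlier lemma handled the vanishing-denominator issue: the set of initial pairs $(\lambda(1),\lambda(2))$ that eventually force some $\lambda(n)=0$ is a countable union of lower-dimensional exceptional sets, so generic choices of the two initial values avoid it and keep the whole sequence nonzero. One can argue this by induction, showing that for all but a measure-zero (indeed countable, when one coordinate is fixed) set of initial data, every $\lambda(n)$ stays nonzero; this parallels the selection of the constant $c$ outside a countable exceptional set in Lemma \ref{solvinglambdamu}. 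I would close by noting that, as in the degree-two case, the nonuniqueness is precisely parametrized by the free initial data $\lambda(1),\lambda(2)$, matching the footnote.
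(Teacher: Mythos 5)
Your algebraic core is correct, but your route is genuinely different from the paper's in presentation. You solve the first and third equations of \eqref{eq:solveforlambdaf1f2f3} for $g_1,g_2$ and convert the middle one into a second-order nonlinear (Riccati-type) recursion
$\lambda(n) = f_1(n) + \bigl(f_2(n) + f_3(n)/\lambda(n-2)\bigr)/\lambda(n-1)$,
driven by the free data $\lambda(1),\lambda(2)$. The paper instead defines $g_1$ from the first equation and $g_2$ from the \emph{second} (your choice of which equation holds ``by definition'' is the mirror image), and writes $\lambda$ in closed form as $\lambda_{c,d}(n) = (\alpha_n c + \beta_n d + \gamma_n)/(\alpha_{n-1}c + \beta_{n-1}d + \gamma_{n-1})$, where $\{\alpha_n\},\{\beta_n\},\{\gamma_n\}$ are the basis solutions of the original depth-3 linear recurrence; the remaining (third) equation is then verified by induction. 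These are the same construction in different clothes: substituting $\lambda(n) = u_n/u_{n-1}$ into your recursion linearizes it to $u_n = f_1(n)u_{n-1} + f_2(n)u_{n-2} + f_3(n)u_{n-3}$, whose general solution is exactly $u_n = \alpha_n c + \beta_n d + \gamma_n$. What your version buys is directness (no auxiliary sequences, the recursion for $\lambda$ is explicit); what the paper's version buys is an explicit description of the exceptional set of parameters.

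That last point is where your write-up has a soft spot. You assert that the set of initial pairs $(\lambda(1),\lambda(2))$ for which some $\lambda(n)$ vanishes (or is undefined) is a countable union of lower-dimensional sets, ``by induction.'' Each $\lambda(n)$ is a rational function of $(\lambda(1),\lambda(2))$, so to conclude that its zero set is lower-dimensional you must rule out that this rational function vanishes identically; nothing in the bare recursion hands you that, and the induction you gesture at does not obviously close. The linearization supplies exactly the missing fact: $\lambda(n)=0$ (or a vanishing denominator) occurs only on the zero set of the affine-linear map $(c,d)\mapsto \alpha_n c + \beta_n d + \gamma_n$, which is a line unless $(\alpha_n,\beta_n,\gamma_n)=(0,0,0)$, and that degeneracy cannot occur when $f_3$ is non-vanishing (the transfer matrices then have nonzero determinant $\pm f_3(n)$, so the three basis state vectors remain linearly independent); note both your argument and the paper's implicitly need such a non-vanishing hypothesis, exactly as $g\neq 0$ was assumed in the depth-2 case. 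So your plan is sound, but making the genericity step rigorous essentially forces you to rediscover the paper's closed form, at which point the two proofs coincide.
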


\begin{proof}
Let $\{\alpha_n\}$, $\{\beta_n\}$, $\{\gamma_n\}$ be sequences satisfying the same recurrence relation
\begin{align}
\alpha_n \ &=\ f_1(n)\alpha_{n-1}+f_2(n)\alpha_{n-2}+f_3(n)\alpha_{n-3}, \nonumber \\
\beta_n \ &=\ f_1(n)\beta_{n-1}+f_2(n)\beta_{n-2}+f_3(n)\beta_{n-3}, \nonumber \\
\gamma_n \ &=\ f_1(n)\gamma_{n-1}+f_2(n)\gamma_{n-2}+f_3(n)\gamma_{n-3}
\end{align}
with initial terms
\begin{align}
\alpha_0 \ = \ 0, \; \alpha_1 \ = \ 0,& \; \alpha_2 \ = \ 1, \nonumber \\
\beta_0 \ = \ 0 , \; \beta_1 \ = \ 1,& \; \beta_2 \ = \ 0, \nonumber \\
\gamma_0 \ = \ 1, \; \gamma_1 \ = \ 0,& \; \gamma_2 \ = \ 0.
\end{align}
Choose $(c,d) \in \R^2\setminus \{(r,-(\alpha_k r + \gamma_k)/\beta_k) \ : \ r\in \R, \beta(k)\ne 0\}$ to be an arbitrary constant. Now we define
\begin{align}
\lambda_{c,d}(n) \ &= \ \frac{\alpha_n c + \beta_n d + \gamma_n}{\alpha_{n-1}c+\beta_{n-1}d + \gamma_{n-1}}, \nonumber \\
g_{1,c,d}(n) \ &= \ f_1(n+1) - \lambda_{c,d}(n+1), \nonumber \\
g_{2,c,d}(n) \ &= \ f_2(n+1) + g_{1,c,d}(n)\lambda_{c,d}(n).
\end{align}
The definitions of $g_{1,c,d},g_{2,c,d}$ ensure that first two equations of \eqref{eq:solveforlambdaf1f2f3} both hold; the third equation follows by induction. Since we chose $(c,d) \not\in \{(r,-(\alpha_k r + \gamma_k)/\beta_k) \ : \ r\in \R, \beta_k\ne 0\}$, the denominator of our expression for $\lambda_{c,d}$ never vanishes. \hfill $\Box$
\end{proof}

\section{Generalization to Multiplicative Recurrence Relations} \label{section:multi}

So far, we have found that a large family of sequences defined by linear recurrences follow Benford's law. Inspired by an idea\footnote{Generalizing the recurrence relation of Fibonacci $k$-step numbers, the authors proved the recursive sequence $x_n = \prod_{i=1}^{k} x_{n-i}$ to be Benford. Each term $x_n$ ($n>k$) is the product of powers of $x_1, \cdots, x_k$ with Fibonacci $k$-step numbers as the exponents.} in \cite{RM}, we consider sequences generated by multiplicative recurrence relations. We find that we can use our results from the previous section to give conditions under which the sequence obeys Benford's law.

Suppose $f_1, f_2, \ldots, f_k$ are functions on $\N_{\geq 0}$ and define the sequence $\{A_n\}_{n=1}^{\infty}$ by the multiplicative recurrence relation
\begin{equation} \label{eq:multRecur}
A_{n+1} \ =\ A_n^{f_1(n)} A_{n-1}^{f_2(n)} \dots A_{n-k+1}^{f_k(n)}
\end{equation}
with initial values $A_1, \dots, A_k$. We see that $A_n$ is a product of powers of the initial terms $A_1, \dots, A_k$; the exponents satisfy the recurrence \eqref{eq:highRecur}.

In this section, we use $k=2$ as an example to illustrate the relationship between \eqref{eq:multRecur} and \eqref{eq:highRecur}, and then give a simple example of a sequence $\{A_n\}$ that obeys Benford's law.

Define sequence $\{A_n\}_{n=1}^{\infty}$ by the recurrence relation \begin{equation}
A_{n+1} \ =\ A_n^{f(n)} A_{n-1}^{g(n)}
\end{equation} with initial values $A_1$, $A_2$. Then we can express the elements of the sequence in terms of $A_1$ and $A_2$.

\begin{lem}
Let the sequence $\{A_n\}_{n=1}^{\infty}$ be as given above. Then the closed form of $A_n$ is $A_n = A_2^{x_n} A_1^{y_n}$, where the exponents $\{x_n\}$ and $\{y_n\}$ satisfy the linear recurrence relations
\begin{align}
x_{n+1} \ = \ f(n)x_n + g(n)x_{n-1}, \nonumber \\
y_{n+1} \ = \ f(n)y_n + g(n)y_{n-1}
\end{align}
with initial values
\begin{align}
x_1 \ = \ 0, x_2 \ = \ 1, \nonumber \\
y_1 \ = \ 1, y_2 \ = \ 0.
\end{align}
\end{lem}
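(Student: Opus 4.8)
The plan is to prove the closed form directly by induction on $n$, tracking how the exponents of the two initial terms $A_1$ and $A_2$ evolve under the recurrence. The conceptual point is that the multiplicative recurrence becomes the linear one of Section \ref{section:order2} after taking logarithms: if we posit $A_n = A_2^{x_n}A_1^{y_n}$, then $\log A_n = x_n \log A_2 + y_n \log A_1$, and the laws of exponents turn the product $A_n^{f(n)}A_{n-1}^{g(n)}$ into a single expression of the same shape $A_2^{(\cdots)}A_1^{(\cdots)}$. One then simply reads off that the exponent of $A_2$ and the exponent of $A_1$ must each satisfy the same depth-two linear recurrence with coefficients $f(n),g(n)$, which is precisely the content of the lemma.

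First I would verify the base cases, which also explain where the prescribed initial values come from. For $n=1$ the claimed formula gives $A_2^{x_1}A_1^{y_1} = A_2^{0}A_1^{1} = A_1$, and for $n=2$ it gives $A_2^{x_2}A_1^{y_2} = A_2^{1}A_1^{0} = A_2$; both match the initial values of the sequence $\{A_n\}$. Thus the initial conditions $x_1=0,\ x_2=1,\ y_1=1,\ y_2=0$ are exactly those forced by requiring the closed form to reproduce $A_1$ and $A_2$.

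Next, for the inductive step, I would use a two-step induction (appropriate since each term depends on its two predecessors): assuming the formula holds for indices $n-1$ and $n$, I substitute into the defining recurrence and collect exponents,
\begin{align*}
A_{n+1} \ &=\ A_n^{f(n)}A_{n-1}^{g(n)} \ =\ \left(A_2^{x_n}A_1^{y_n}\right)^{f(n)}\left(A_2^{x_{n-1}}A_1^{y_{n-1}}\right)^{g(n)}\\
\ &=\ A_2^{\,f(n)x_n+g(n)x_{n-1}}\;A_1^{\,f(n)y_n+g(n)y_{n-1}}.
\end{align*}
By the stated recurrences $x_{n+1}=f(n)x_n+g(n)x_{n-1}$ and $y_{n+1}=f(n)y_n+g(n)y_{n-1}$, the exponent of $A_2$ is exactly $x_{n+1}$ and the exponent of $A_1$ is exactly $y_{n+1}$, so $A_{n+1}=A_2^{x_{n+1}}A_1^{y_{n+1}}$, completing the induction for all $n\ge 1$ (the recurrence itself being valid for $n\ge 2$).

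Since the argument is a direct computation, there is essentially no hard step; the only point requiring care is the exponent arithmetic in the inductive step, for which it is cleanest to assume the $A_i$ are positive so that the manipulation of real exponents is unambiguous. This positivity assumption is harmless in the present setting, where one already restricts to positive sequences (or passes to absolute values) before studying Benfordness. I would close by noting that this lemma is precisely what lets us transfer the results of Sections \ref{section:order2} and \ref{section:higherDegree}: Benfordness of $\{A_n\}$ reduces via Lemma \ref{lem:benfordEquidistr} to the equidistribution modulo $1$ of $\log A_n = x_n\log A_2 + y_n\log A_1$, a linear combination of the very exponent sequences whose behavior we have already analyzed.
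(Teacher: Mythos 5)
Your proof is correct and follows essentially the same route as the paper's: induction on $n$, substituting the closed form into the multiplicative recurrence and collecting the exponents of $A_2$ and $A_1$ to read off the two linear recurrences. Your explicit verification of the base cases and the remark on positivity of the $A_i$ are minor elaborations the paper leaves implicit, but the argument is the same.
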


\begin{proof}
Here we proceed by induction. The base cases are easily established by substituting in the initial values of the sequences $\{x_n\}$ and $\{y_n\}$. Now assume the recurrences hold for $\N_{\leq n}$. As
\begin{align}
A_{n+1} \ &=\ A_n^{f(n)}A_{n-1}^{g(n)} \nonumber  \\
        \ &=\ \left(A_2^{x_n}A_1^{y_n} \right)^{f(n)} \left(A_2^{x_{n-1}} A_1^{y_{n-1}} \right)^{g(n)} \nonumber  \\
        \ &=\ A_2^{f(n)x_n+g(n)x_{n-1}} A_1^{f(n)y_n+g(n)y_{n-1}},
\end{align}
we get
\begin{eqnarray}
x_{n+1} & \ = \ & f(n)x_n + g(n)x_{n-1}, \nonumber \\
y_{n+1} & \ = \ & f(n)y_n + g(n)y_{n-1}.
\end{eqnarray}
\hfill $\Box$
\end{proof}

Since both sequences $\{x_n\}$ and $\{y_n\}$ are generated by the same recurrence as \eqref{eq:recurWithfg}, by Section \ref{section:solveCoeff} we can find auxilary functions $\lambda(n)$ and $\mu(n)$. If the functions $f(n)$ and $g(n)$ satisfy the hypotheses of Lemma \ref{lem:mainTermCond}, then both $\{x_n\}$ and $\{y_n\}$ have asymptotic forms
\begin{eqnarray}
x_{n+1}  &\ \to\  &(x_2 - \lambda(1)x_1) \prod_{i=2}^{n} \mu(i)
        \ = \ \prod_{i=2}^{n} \mu(i), \nonumber\\
y_{n+1}  &\to& (y_2 - \lambda(1)y_1) \prod_{i=2}^{n} \mu(i)
        \ = \ -\lambda(1) \prod_{i=2}^{n} \mu(i)
\end{eqnarray}
as $n \to \infty$.

Therefore, under this condition,
\begin{align}
\log(A_{n+1}) \ &\to\ \left(\prod_{i=2}^{n}\mu(i)\right)\log(A_2) + \left(-\lambda(1) \prod_{i=2}^{n}\mu(i)\right) \log(A_1) \nonumber \\
			  \ &=\ \left(\prod_{i=2}^{n}\mu(i)\right) \left(\log(A_2)-\lambda(1)\log(A_1) \right).
\end{align}

By Lemmas \ref{lem:benfordEquidistr} and \ref{lem:benfordAsym}, if $\{c_0 \prod_{i=2}^{n}\mu(i)\}_{n=2}^{\infty}$ is equidistributed mod 1 where $c_0 = \log(A_2)-\lambda(1)\log(A_1)$ is a constant, then $\{A_n\}$ is a Benford sequence.

\begin{exa}
Let $\log(A_2)-\lambda(1)\log(A_1) = c_0 \notin \Q$ and $\mu(i) = \frac{P(i)}{P(i-1)}$ where $P(n)$ is a non-vanishing monic polynomial.
\end{exa}

This construction is immediately suggested by Lemma \ref{lem:edPoly}.

\section{Questions and Future Research}

\begin{enumerate}
\item In Section \ref{section:order2}, we mainly consider the case when $\lambda(n)/\mu(n) \to 0$ as $n \to \infty$. The case $\lambda(n)/\mu(n) = r$ where $r \in \R$ for all but finitely many $n$ can also be analyzed in a similar fashion, and $\{a_n\}$ is again dominated by a multiplicative term. The challenge is when $\lambda(n)/\mu(n) \to \infty$. In this case, there is no single main term (at least the way we have chosen to reduce  our sequences).

\item In Example \ref{exa:rv}, we explore the case where $f$ and $g$ are random variables. This differs from the case where they are explicit functions, in that we have allowed randomness. Many processes, such as the example in the introduction from mathematical biology, can be described as recurrences with random variables as coefficients and thus it would be worthwhile exploring the Benfordness of such systems.


\end{enumerate}

\appendix

\section{Proof of Weyl's Theorem}\label{sec:proofweyl}

Because of Lemma \ref{lem:benfordEquidistr}, to prove $\{x_n\}$ is Benford it suffices to prove the corresponding sequence $y_n = \log_{10} x_n \bmod 1$ is equidistributed. If $y_n = n \alpha \bmod 1$ (so $x_n$ is the geometric series $(e^{\alpha})^n$), Weyl's theorem yields it is equidistributed if and only if $\alpha$ is irrational. Before doing this, we first prove an easier result, Kronecker's Theorem, which states that if $\alpha$ is irrational than the sequence $n\alpha \bmod 1$ is dense in $[0,1)$; in other words, given any $x \in [0,1)$ and any $\epsilon > 0$ there exist an $n$ such that $|x - (n \alpha \bmod 1)| < \epsilon$. We give these proofs as they highlight the applicability of Fourier Analysis to understanding instances of Benford's law.

\subsection{Kronecker's Theorem}

\begin{thm}[Kronecker's Theorem] Let $\alpha$ be an irrational number. Then $\{n \alpha \bmod 1\}$ is dense in $[0,1)$. \end{thm}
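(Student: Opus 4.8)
The plan is to give the classical pigeonhole argument: first produce a single multiple of $\alpha$ whose fractional part is small, and then step by that small amount to fill the circle. Throughout I would write $\|y\|$ for the distance from $y$ to the nearest integer, so that $\|y\| < \epsilon$ captures precisely the statement that $y \bmod 1$ lies within $\epsilon$ of $0$ or of $1$.

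First I would fix a target $x \in [0,1)$ and a tolerance $\epsilon > 0$, and choose an integer $N$ with $1/N < \epsilon$. Partitioning $[0,1)$ into the $N$ subintervals $[j/N,(j+1)/N)$ and looking at the $N+1$ fractional parts $m\alpha \bmod 1$ for $m = 0,1,\ldots,N$, the pigeonhole principle forces two of them, say for indices $m_1 < m_2$, into a common subinterval. Setting $k := m_2 - m_1 \in \{1,\ldots,N\}$ then yields $\|k\alpha\| < 1/N$. Here irrationality of $\alpha$ enters: $k\alpha$ is never an integer for $k \neq 0$, so $\|k\alpha\| > 0$. Call this quantity $\delta$, with $0 < \delta < 1/N < \epsilon$.

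Next I would step by $\delta$ around the circle. The orbit $\{jk\alpha \bmod 1 : j = 1,2,\ldots\}$ consists of genuine members of our sequence, since each index $jk$ is a positive integer, and its consecutive members differ modulo $1$ by exactly $k\alpha \bmod 1$, which equals either $\delta$ or $1-\delta$. Thus each step advances (or retreats) around $[0,1)$ by $\delta < 1/N$, so after finitely many steps the orbit has traversed the circle leaving no gap longer than $\delta$, forming a $\delta$-net of $[0,1)$. In particular some $jk\alpha \bmod 1$ lies within $\delta < \epsilon$ of $x$. As $x$ and $\epsilon$ were arbitrary, $\{n\alpha \bmod 1\}$ is dense in $[0,1)$.

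The one point that needs care, and the main obstacle, is the wraparound bookkeeping in the last step: one must verify that an orbit advancing by a fixed increment $\delta$ (or $1-\delta$) modulo $1$ genuinely meets every subinterval of length $1/N$, with the two sign cases (namely $k\alpha \bmod 1$ sitting just above $0$ versus just below $1$) handled symmetrically. This is routine once everything is phrased through $\|\cdot\|$, but it is exactly where a careless argument could overlook the possibility that the small fractional part lands near $1$ rather than near $0$.
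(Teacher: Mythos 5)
Your proposal is correct and follows essentially the same route as the paper's proof: pigeonhole on the $N+1$ fractional parts $m\alpha \bmod 1$ to produce a small step $\|k\alpha\| < 1/N$, then iterating that step to sweep within $1/N$ of every point of $[0,1)$. If anything, you are slightly more careful than the paper, which assumes ``for convenience'' that the small fractional part lies near $0$ rather than near $1$, and which leaves the use of irrationality (to ensure $\|k\alpha\| > 0$, so the orbit actually moves) implicit, whereas you flag both points explicitly.
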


\begin{proof} The prove is an immediate consequence of the Pigeonhole Principle: if we place $Q+1$ objects in $Q$ boxes, at least one box has two items.

Given a small $\epsilon > 0$, choose $Q$ so large that $1/Q < \epsilon$, and divide $[0,1)$ into $Q$ intervals: \be [0,1) \ = \ [0, 1/Q) \ \cup \ [1/Q, 2/Q) \ \cup \ \cdots \ \cup \ [(Q-1)/Q, 1). \ee If we look at the $Q+1$ numbers \be \alpha \bmod 1, \ \ \ 2 \alpha \bmod 1, \ \ \ \dots, \ \ \ Q\alpha \bmod 1, \ \ \ (Q+1)\alpha \bmod 1 \ee then at least two of them, say $m_1 \alpha \bmod 1$ and $m_2 \alpha \bmod 1$, must be in the same sub-interval of length $Q$. Thus as $m_i \alpha \bmod 1 = m_i \alpha - n_i$ for some integer $n_i$, we have \be \left|(m_1 \alpha - n_1) - (m_2 \alpha - n_2)\right| \ \le \ \frac1{Q} \ \ \ {\rm or} \ \ \ \left|(m_1-m_2)\alpha - (n_1-n_2)\right| \ \le \ \frac1{Q}. \ee

Letting $m = m_1-m_2$, we see that $|m\alpha \bmod 1|$ is at most $1/Q$; for convenience we assume it is positive and lives between $0$ and $1/Q$, though a similar argument would hold if it was just a little less than 1. Thus if we look at \be m\alpha \bmod 1, \ \ \ 2m\alpha \bmod 1, \ \ \ 3m\alpha \bmod 1, \ \ \ \dots \ee then each term is at most $1/Q$ from the previous in the above sequence, and we are always moving in the same direction (to the right), given any $x \in [0,1)$ we will eventually be within $1/Q$ of it. The reason is each term is at most $1/Q$ from the previous, so we cannot jump from more than $1/Q$ smaller than $x$ to more than $1/Q$ above. \hfill $\Box$\end{proof}

We thus see that it is relatively straightforward to establish the denseness of $n\alpha \bmod 1$ for irrational $\alpha$. Unfortunately for Benfordness we need it to be equidistributed, which is significantly more work.

\subsection{Weyl's Theorem}

We sketch the proof of Weyl's result. Actually, far more is true than stated in Theorem \ref{thm:weylequidistributionkis1}.

\begin{thm}[Weyl's Theorem]\label{thmweylequi} Let $\alpha$ be an irrational number
in $[0,1)$, and let $k$ be a fixed positive integer. Let $x_n =
\{n^k \alpha\}$. Then $\{x_n\}_{n=1}^\infty$ is equidistributed.
\end{thm}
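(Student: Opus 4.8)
The plan is to prove Weyl's Theorem (Theorem \ref{thmweylequi}) by means of the \emph{Weyl Equidistribution Criterion}, which reduces equidistribution of a sequence $\{x_n\}$ to the decay of its associated exponential sums. Specifically, a sequence $\{x_n\} \subset [0,1)$ is equidistributed if and only if for every nonzero integer $h$,
\begin{equation}
\lim_{N \to \infty} \frac{1}{N} \sum_{n=1}^{N} \exp(2\pi i h x_n) \ = \ 0.
\end{equation}
The necessity of establishing this criterion is itself a short Fourier-analytic argument: indicator functions of intervals can be approximated by trigonometric polynomials, and the criterion controls each Fourier mode. I would first record this criterion (citing or sketching the approximation argument), since it converts a counting problem into an estimation problem perfectly suited to the exponential-sum techniques emphasized throughout this paper.

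With the criterion in hand, the heart of the proof is to show that $S_N(h) := \sum_{n=1}^{N} \exp(2\pi i h n^k \alpha) = o(N)$ for each fixed nonzero integer $h$ and irrational $\alpha$. The method I would use is \textbf{Weyl differencing} (the van der Corput trick), carried out by induction on the degree $k$. The base case $k=1$ is exactly Theorem \ref{thm:weylequidistributionkis1} already stated, where the geometric sum $\sum_{n=1}^N \exp(2\pi i h n \alpha)$ telescopes and is bounded by $1/|\sin(\pi h \alpha)|$, a constant independent of $N$, giving $O(1) = o(N)$. For the inductive step, I would bound $|S_N(h)|^2$ by expanding the square and reindexing, which yields
\begin{equation}
|S_N(h)|^2 \ \le \ N + 2 \sum_{d=1}^{N-1} \left| \sum_{n} \exp\!\left(2\pi i h \bigl((n+d)^k - n^k\bigr)\alpha\right) \right|.
\end{equation}
The key observation is that $(n+d)^k - n^k$ is, for fixed $d$, a polynomial in $n$ of degree $k-1$ whose leading coefficient $kd\alpha$ is irrational. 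Thus the inner sum is an exponential sum of a degree $(k-1)$ polynomial with irrational leading coefficient, to which the inductive hypothesis (in the slightly strengthened polynomial form of Lemma \ref{lem:edPoly}) applies.

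The main obstacle, and the step requiring the most care, is managing the quantifiers in the induction: the inductive hypothesis must be stated not just for the pure monomial $n^{k-1}\alpha$ but for \emph{arbitrary} polynomials of degree $k-1$ with irrational leading coefficient, since the differenced polynomial $(n+d)^k - n^k$ carries lower-order terms depending on $d$. One must verify that the estimate from the inductive hypothesis is uniform enough in $d$ that, after dividing by $N^2$ and taking $N \to \infty$, the averaged contribution still tends to zero. Concretely, the plan is to show that for each fixed $d$ the inner sum is $o(N)$, then split the outer sum over $d$ into a range $d \le D$ (handled by the inductive bound) and a tail, and optimize $D$ as a function of $N$; letting $N \to \infty$ first and then appealing to the arbitrariness of the truncation gives $|S_N(h)|^2 = o(N^2)$, hence $S_N(h) = o(N)$. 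This completes the induction and, via the Weyl criterion, establishes that $\{n^k \alpha\}$ is equidistributed, as desired. \hfill $\Box$
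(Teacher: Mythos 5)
Your overall strategy---reduce equidistribution to exponential-sum decay via the Weyl criterion, then induct on the degree $k$ by differencing---is the standard route to the general-$k$ statement, and it is in fact more ambitious than the paper itself: the paper states Theorem \ref{thmweylequi} for all $k$ but carries out the proof only for $k=1$ (the geometric-series computation plus the Fej\'{e}r/trigonometric-polynomial approximation, which is precisely the criterion you invoke at the start). Your base case is correct, and so is your identification of the key structural points: that $(n+d)^k-n^k$ is a polynomial in $n$ of degree $k-1$ with irrational leading coefficient $hkd\alpha$, and that the induction must therefore be run over all polynomials with irrational leading coefficient rather than over monomials.

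The genuine gap is in how you handle the sum over the shift $d$. Expanding $|S_N(h)|^2$ over the full range $1\le d\le N-1$ and then truncating at $D$ cannot be made to work. The inductive hypothesis controls $T_d(N):=\sum_{n}\exp\bigl(2\pi i h((n+d)^k-n^k)\alpha\bigr)$ only for each \emph{fixed} $d$, and the rate is genuinely non-uniform in $d$ (it depends on the Diophantine properties of $hkd\alpha$, which for suitable $\alpha$ can be nearly resonant for many $d$ up to $N$). Meanwhile the tail admits only the trivial bound $\sum_{d>D}|T_d(N)|\le\sum_{d>D}(N-d)\sim(N-D)^2/2$, which is of order $N^2$ whether $D$ is fixed or grows slowly with $N$; no choice of $D(N)$ makes the main range and the tail simultaneously $o(N^2)$, so "optimizing $D$" does not resolve the obstacle you correctly flagged. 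The standard repair is van der Corput's inequality, i.e.\ \emph{windowed} differencing: write $H\,S_N(h)=\sum_n\sum_{j=0}^{H-1}u_{n+j}+O(H^2)$ with $u_n=\exp(2\pi i h n^k\alpha)$ and apply Cauchy--Schwarz, which for $1\le H\le N$ gives an inequality of the shape
\begin{equation}
|S_N(h)|^2 \ \le\ \frac{2N}{H}\left(N \ + \ 2\sum_{d=1}^{H-1}\left|T_d(N)\right|\right).
\end{equation}
Now only the fixed, finite set of shifts $d<H$ appears, so the inductive hypothesis applies term by term: for fixed $H$, letting $N\to\infty$ yields $\limsup_{N}|S_N(h)|^2/N^2\le 2/H$, and then letting $H\to\infty$ completes the induction. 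In short, the idea is right, but your quantifier order (difference over all $d$, then truncate) must be replaced by fixing the differencing window $H$ \emph{before} taking $N\to\infty$; the $1/H$ saving in the diagonal term is what pays for restricting to finitely many shifts.
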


\begin{proof} We expand slightly the proof from \cite{MT}, concentrating on the $k=1$ case. We start with some convenient notation. Define \be \twocase{\chi_{(a,b)}(x) \ :=
\ }{1}{if $x \in [a,b)$}{0}{otherwise;} \ee $\chi_{(a,b)}$ the characteristic (or indicator) function of the interval $[a,b)$.

We must show for any $[a,b) \subset [0, 1)$ that \be\label{eq:limchiab} \lim_{N \rightarrow
\infty} \frac{1}{N} \sum_{n=1}^N \chi_{(a,b)}(x_n) \ = \
b-a.\ee  The idea is to use a sequence of approximations common in analysis. We show that instead of proving a result for a sharp characteristic function it suffices to prove it for a continuous function. Then we show it is enough to prove it for a trigonometric polynomial. Such functions involve exponentials, as \be e_n(x) \ := \ e^{2\pi i n x}\ =\ \cos(2\pi n x) + i \sin(2\pi n x).\ee We will need to show that in the limit the resulting sums vanish if and only if $n \neq 0$. The reason we can do so is that our sums are geometric series, and through the geometric series formula we obtain closed form expressions.

Turning to the details, we first record a sum which will be useful later: \be
\begin{aligned} \frac{1}{N} \sum_{n = 1}^N e_m(x_n) &\ = \
\frac{1}{N} \sum_{n=1}^N e_m(n \alpha)
\\ &\ =\ \frac{1}{N} \sum_{n = 1}^N (e^{2\pi i m \alpha})^n \\
&\ = \  \begin{cases} 1 & \text{if $m=0$} \\ \frac{1}{N}
\frac{e_m(\alpha) - e_m((N+1) \alpha)}{1 - e_m(\alpha)} &
\text{if $m>0$,}  \end{cases}\end{aligned}  \ee where the last
follows from the geometric series formula. We now use the irrationality of $\alpha$, which implies that  $|1 - e_m(\alpha)|
> 0$. Thus for $m \neq
0$, \be\label{eq:fixedmlimNtoinfy} \lim_{N\to \infty} \frac{1}{N} \frac{e_m(\alpha) - e_m((N+1) \alpha)}{1 - e_m(\alpha)} \ = \
0.\ee

We now show that we can replace $\chi_{a,b}$ in \eqref{eq:limchiab} with an error as small as we desire.
Let \be P(x)\ =\ \sum_{m=-M}^M a_m e_m(x) \ee be a finite \emph{fixed} trigonometric polynomial; we we have chosen the range of indices to be symmetric, some of the coefficients may be zero and present just to simplify notation. It is essential that $M$ if fixed, independent of $N$, and a straightforward calculation shows\be \int_0^1 P(x)dx\ =\ a_0\ee (this is because sines and cosines integrate to zero over an integer number of periods, and the $n=0$ term is just the constant function 1).

Equation \eqref{eq:fixedmlimNtoinfy} implies that for any
\emph{fixed} finite trigonometric polynomial $P(x)$, we have \be
\lim_{N\to \infty} \frac{1}{N} \sum_{n=1}^N P(x_n) \ = \ a_0
\ = \ \int_0^1 P(x)dx; \ee it is essential here that $M$ is fixed, so we can take the limit as $N\to\infty$.

As $M$ is fixed, we may bound $|1 -
e_m(\alpha)|$ away from zero for all $m \in \{-M,\dots,M\}$. If $M$
varied with $N$, then $1-e_m(\alpha)$ could tend to $0$ for some
values of $m$ (depending on $N$). However, since $m$ is fixed  \bea \frac1{N} \sum_{n=1}^N P(x_n) & \ = \ & \frac1{N}
\sum_{n=1}^N \sum_{m=-M}^M a_m e_m(x_n) \nonumber\\ & = &
\sum_{m=-M}^M a_m \frac{1}{N} \sum_{n=1}^N e_m(x_n). \eea
Letting $A = \max_m |a_m|$ and $B = \min_m |1-e_m(\alpha)| > 0$,
from \eqref{eq:fixedmlimNtoinfy} we obtain \be \left|a_0 -
\frac1{N} \sum_{n=1}^N P(x_n)\right| \ \le \ \sum_{m=-M \atop m
\neq 0}^M A \cdot \frac{2}{NB} \ \le \ \frac{2MA}{B}
\frac1{N}, \ee which tends to zero as $N\to\infty$.

\begin{figure}[h]
\begin{center}
\scalebox{1}{\includegraphics{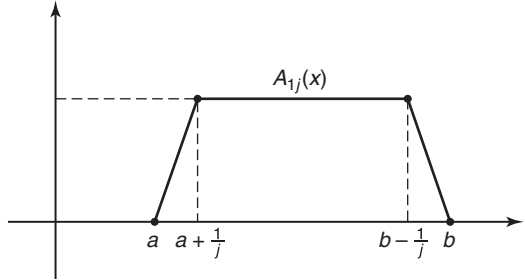}}\caption{\label{fig:weyl}Plot
of $A_{1j}(x)$. Note it is at most $\chi_{a,b}(x)$, and the two functions are equal in $[a+1/j, b-1/j]$. }
\end{center}\end{figure}

Consider two continuous approximations to the characteristic
function $\chi_{(a,b)}$ (see Figure \ref{fig:weyl} for a plot of one of them): \ben
\item $A_{1j}$: $A_{1j}(x) = 1$ if $a + \frac{1}{j} \le x \le b -
\frac{1}{j}$, drops linearly to $0$ at $a$ and $b$, and is zero
elsewhere (see Figure \ref{fig:weyl}).
\item $A_{2j}$: $A_{1j}(x) = 1$ if $a \le x \le b$, drops linearly
to $0$ at $a-\frac{1}{j}$ and $b+\frac{1}{j}$, and is zero
elsewhere.
\een

Note there are trivial modifications if $a = 0$ or $b = 1$.
Clearly \be A_{1j}(x) \ \le \ \chi_{(a,b)}(x) \ \le \ A_{2j}(x).
\ee Therefore \be \frac{1}{N} \sum_{n=1}^N A_{1j}(x_n) \ \le \
\frac{1}{N} \sum_{n=1}^N \chi_{(a,b)}(x_n) \ \le \
\frac{1}{N} \sum_{n=1}^N A_{2j}(x_n). \ee

We now need a result from Fourier Analysis, Fej\'{e}r's Theorem, on the weighted Fourier series associated to a continuous, periodic function $f$. For such an $F$, the Fourier coefficients are \be a_n \ := \ a_n(f) \ = \ \int_0^1 f(x) e^{-2\pi i n x} dx,\ee and the $N$\textsuperscript{th} Fej\'{e}r series $T_Nf$ is the weighted sum of the Fourier coefficients: \be T_Nf(x) \ := \ \sum_{n=-N}^N \left(1 - \frac{|n|}{N}\right) a_n e^{2\pi i n x}. \ee

\begin{thm}[Fej\'{e}r's Theorem]\label{thmfejer} Let $f(x)$ be a continuous, periodic
function on $[0,1]$. Given $\epsilon>0$ there exists an $N_0$ such
that for all $N > N_0$, \be |f(x) - T_N(x) |\ \leq\ \epsilon\ee
for every $x\in [0,1)$. Hence as $N\to\infty$, $T_Nf(x) \to f(x)$.
\end{thm}

The presence of the weight factors $(1 - |n|/N)$ gives the Fej\'{e}r series better convergence properties than the normal Fourier series. In particular, note that the coefficients close to $\pm N$ have a weight of almost zero, while those whose index is close to $0$ are almost unchanged.

By Fej\'{e}'s Theorem, for each $j$, given $\epsilon > 0$ we can find
symmetric trigonometric polynomials $P_{1j}(x)$ and $P_{2j}(x)$
such that $|P_{1j}(x) - A_{1j}(x)| < \epsilon$ and $|P_{2j}(x) -
A_{2j}(x)| < \epsilon$. As $A_{1j}$ and $A_{2j}$ are continuous
functions, we can replace \be \frac{1}{N} \sum_{n=1}^N
A_{ij}(x_n) \ \ \ \ \text{with} \ \ \ \ \frac{1}{N}
\sum_{n=1}^N P_{ij}(x_n) \ee at a cost of at most $\epsilon$. As
$N \rightarrow \infty$, \be \frac{1}{N} \sum_{n=1}^N
P_{ij}(x_n) \ \longrightarrow \ \int_0^1 P_{ij}(x)dx. \ee But
$\int_0^1 P_{1j}(x)dx = (b-a) - \frac{1}{j}$ and $\int_0^1
P_{2j}(x)dx = (b-a) + \frac{1}{j}$. Therefore, given $j$ and
$\epsilon$, we can choose $N$ large enough so that \be (b-a) -
\frac{1}{j} - \epsilon \ \le \ \frac{1}{N} \sum_{n=1}^N
\chi_{(a,b)}(x_n) \ \le \ (b-a) + \frac{1}{j} + \epsilon. \ee
Letting $j$ tend to $\infty$ and $\epsilon$ tend to $0$, we see
$\frac{1}{N} \sum_{n=1}^N \chi_{(a,b)}(x_n) \rightarrow b -
a$, completing the proof. \hfill $\Box$
\end{proof}


\bigskip

\end{document}